\documentclass[11pt]{article}

\usepackage[T1]{fontenc}
\usepackage{lmodern}
\usepackage[margin=1in]{geometry}
\usepackage{amsmath,amssymb,amsthm,mathtools}
\usepackage{enumitem}
\usepackage{xcolor}
\usepackage{microtype}
\usepackage{authblk}
\usepackage[colorlinks=true,linkcolor=blue,citecolor=blue,urlcolor=blue]{hyperref}
\usepackage[capitalise]{cleveref}

\numberwithin{equation}{section}
\setlist{nosep}
\setlist[enumerate]{label=\textup{(\arabic*)}}

\newtheorem{theorem}{Theorem}[section]
\newtheorem{lemma}[theorem]{Lemma}
\newtheorem{proposition}[theorem]{Proposition}
\newtheorem{corollary}[theorem]{Corollary}
\newtheorem{claim}[theorem]{Claim}

\newtheorem{problem}[theorem]{Problem}
\theoremstyle{remark}

\newcommand{\N}{\mathbb N}
\newcommand{\cR}{\mathcal R}
\newcommand{\cP}{\mathcal P}
\newcommand{\Ent}{\operatorname{Ent}}
\newcommand{\eps}{\varepsilon}

\title{Entropy Transference for Rainbow-$H$-Free Colourings of \\Random Graphs}

\author[1]{Mengyu Cao\thanks{E-mail: \texttt{myucao@ruc.edu.cn}. Supported by the National Natural Science Foundation of China (12301431) and Beijing Natural Science Foundation (1262010).}}
\author[2]{Mei Lu\thanks{E-mail: \texttt{lumei@tsinghua.edu.cn}. M. Lu is supported by the National Natural Science Foundation of China (Grant 12571372) and Beijing Natural Science Foundation (Grant 1262010).}}
\author[2]{Haixiang Zhang\thanks{Corresponding author. E-mail: \texttt{zhang-hx22@mails.tsinghua.edu.cn}.}}

\affil[1]{\small Institute for Mathematical Sciences, Renmin University of China, Beijing 100086, China}
\affil[2]{\small Department of Mathematical Sciences, Tsinghua University, Beijing 100084, China}

\date{}

\hypersetup{
	pdftitle={Entropy Transference for Rainbow-H-Free Colourings of Random Graphs},
	pdfauthor={Mengyu Cao, Mei Lu and Haixiang Zhang},
	pdfkeywords={rainbow subgraph, random graph, sparse regularity, template entropy, stability}
}

\begin{document}
	\maketitle
	
\begin{abstract}
		Let $H$ be a fixed graph with $e(H)\ge3$ that contains two adjacent edges, and let $\ell\ge e(H)$ be fixed.  We establish an entropy-transference principle for rainbow-$H$-free edge-colourings of the binomial random graph at the natural scale $p=n^{-1/m_2(H)}$.  Writing $R_{H,\ell}(G)$ for the number of such colourings and $\lambda(H,\ell)$ for the rainbow entropy--Tur\'an density on complete graphs, we show that, with high probability, the per-edge logarithmic counting rate can be made arbitrarily close to $\log\ell$ below a sufficiently small constant multiple of this scale, and arbitrarily close to $\lambda(H,\ell)$ above a sufficiently large constant multiple.  Thus the dense-side counting rate on a sparse random host is governed exactly by a deterministic entropy--Tur\'an parameter on complete graphs.  We further investigate this parameter, obtaining partial exact evaluations, corresponding counting-stability results, and its first-order asymptotic behaviour as the number of colours tends to infinity.  This extends the random Gallai-colouring transition from triangles to every fixed non-matching graph containing at least three edges, and provides a general mechanism for transferring complete-graph template entropy to sparse random hosts.
	\end{abstract}
	
	\medskip
	\noindent\textbf{Keywords.}
	Rainbow subgraph, random graph,  template entropy.
	
	\noindent\textbf{MSC classification.}
	05C15, 05C35, 05C80.
	
	\section{Introduction}
	
	\subsection{Colourings without rainbow subgraphs}
	
	A copy of a graph $H$ in an edge-coloured graph is \emph{rainbow} if its edges receive pairwise distinct colours.  Graph copies are not required to be induced.  The systematic study of colourings without rainbow subgraphs began with the anti-Ramsey theory of Erd\H{o}s, Simonovits and S\'os \cite{ErdosSimonovitsSos}.  Classical anti-Ramsey questions ask how many colours can be used on a complete graph while avoiding a rainbow copy of a prescribed graph; structural and local variants have also been studied, for example under restrictions on the multiplicity of each colour at a vertex \cite{AlonJiangMillerPritikin}.  Our problem is enumerative: the number of available colours is fixed, and we count all colourings containing no rainbow copy of $H$.
	
	For a positive integer $s$, write $[s]=\{1,\ldots,s\}$.  For a graph $G$ and a positive integer $\ell$, let
	\[
	\cR_{H,\ell}(G)
	:=\{\chi:E(G)\to[\ell]:\chi\text{ contains no rainbow copy of }H\},
	\qquad
	R_{H,\ell}(G):=|\cR_{H,\ell}(G)|.
	\]
	The basic model case is $H=K_3$.  Colourings with no rainbow triangle are called \emph{Gallai colourings}.  Gallai's theorem states that every such colouring of $K_n$, $n\ge2$, admits a partition into at least two nonempty vertex sets such that all edges between any fixed pair of parts have one colour and at most two colours occur between distinct parts \cite{Gallai,GyarfasSimonyi}.  This recursive structure has an enumerative consequence.  Independently, Balogh and Li \cite{BaloghLi} and Bastos, Benevides and Han \cite{BastosBenevidesHan} proved that, for every fixed $r\ge3$,
	\[
	R_{K_3,r}(K_n)
	=\left(\binom r2+o(1)\right)2^{\binom n2}.
	\]
	In particular, the proportion of Gallai $r$-colourings of $K_n$ that use only two colours tends to one, even though Gallai $r$-colourings using more than two colours do exist.  Thus a counting theorem may describe almost all admissible colourings without giving a pointwise classification of every admissible colouring.  Gallai colourings on non-complete graphs were studied earlier by Gy\'arf\'as and S\'ark\"ozy \cite{GyarfasSarkozy}.
	
	Counting edge-colourings with forbidden colour patterns is closely related to the Erd\H{o}s--Rothschild problem \cite{Erdos1974}.  For monochromatic forbidden cliques, Alon, Balogh, Keevash and Sudakov \cite{AlonBaloghKeevashSudakov} obtained exact enumeration results by combining regularity with extremal graph theory.  Prescribed colour patterns were subsequently studied by Benevides, Hoppen and Sampaio \cite{BenevidesHoppenSampaio}, while Hoppen, Lefmann and Odermann formulated and developed a rainbow Erd\H{o}s--Rothschild problem \cite{HoppenLefmannOdermann}.
	
	Colouring templates and their entropy provide a general language for complete-graph counting problems \cite{FROU,GuptaPehovaPowierskiStaden}.  A template assigns to each edge a nonempty set of permitted colours, called its palette.  If $\cP$ is a template, then exactly $\prod_e|\cP(e)|$ colourings realise it, and its natural-logarithmic entropy is
	\[
	\Ent(\cP)=\sum_e\log|\cP(e)|.
	\]
	Equivalently, this is the Shannon entropy of the product measure obtained by choosing independently and uniformly from each edge palette.  Specialising the general counting theorem of Falgas-Ravry, O'Connell and Uzzell \cite[Corollary~2.15]{FROU} to rainbow-$H$-free colourings gives
	\begin{equation}\label{eq:complete-host-count-intro}
	R_{H,\ell}(K_n)
	=\exp\left((\lambda(H,\ell)+o(1))\binom n2\right),
	\end{equation}
	where $\lambda(H,\ell)$ is the limiting maximum entropy per edge defined formally in \cref{sec:deterministic-entropy}.  They also determined the entropy and its extremal templates for $(H,\ell)=(K_3,3)$ and proved stability in that case \cite[Theorems~4.13 and~4.16]{FROU}.  Thus the existence and abstract entropy interpretation of $\lambda(H,\ell)$ are implicit in the general hereditary-property framework; one aim here is to obtain explicit graph-theoretic information about this parameter.
	
	In the rainbow Erd\H{o}s--Rothschild problem, one instead maximises $R_{H,\ell}(G)$ over all $n$-vertex graphs $G$.  H\`an, Hoppen, M\"uller and Schmidt determined the extremal graph for rainbow-$K_4$-free colourings when the number of colours is at least $12$ \cite{HanHoppenMullerSchmidt}.  In contrast, the host in \eqref{eq:complete-host-count-intro} is $K_n$, while our main problem takes the host to be the random graph $G(n,p)$.
	
	\subsection{Random graphs and entropy transference}
	
	Benevides, Monteiro and Mota initiated the random-host counting problem for Gallai colourings \cite{BMM2026}.  We write $G(n,p)$ for the binomial random graph on $[n]$, in which every edge appears independently with probability $p$.  Writing $\operatorname{gc}_3(G)=R_{K_3,3}(G)$, their theorem is as follows.
	
	\begin{theorem}[Benevides--Monteiro--Mota~\cite{BMM2026}]\label{thm:random-gallai}
		For every $\delta>0$ there are constants $a,A>0$ such that, with high probability,
		\[
		\begin{cases}
			\operatorname{gc}_3(G(n,p))
			\ge 3^{(1-\delta)e(G(n,p))},
			& p\le a n^{-1/2},\\[1ex]
			\operatorname{gc}_3(G(n,p))
			\le 2^{(1+\delta)e(G(n,p))},
			& p\ge A n^{-1/2}.
		\end{cases}
		\]
	\end{theorem}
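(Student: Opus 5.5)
The statement has two regimes, and I would treat them by separate arguments: a deletion argument for the sparse lower bound, and the sparse container method for the dense upper bound.

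\emph{Sparse regime, $p\le a n^{-1/2}$.} Here $m_2(K_3)=2$. I would show that rainbow triangles are rare obstructions, because the number of triangles is small compared with the number of edges. With high probability $e(G)=(1+o(1))p\binom n2$, and the number of triangles is concentrated around $p^3\binom n3\le a^2 p n^2/6$. So the triangle count is at most $C a^2 e(G)$ with high probability (Markov's inequality suffices).

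Next I delete one edge from every triangle, leaving a triangle-free graph $G'$ with $e(G')\ge(1-Ca^2)e(G)$. Every $3$-colouring of $E(G')$ that colours all deleted edges with colour $1$ is Gallai. Indeed, any triangle of $G$ contains a deleted edge. If that triangle were rainbow, at most one of its edges could have colour $1$, so it would contain exactly one deleted edge. I handle this by deleting \emph{all} edges lying in triangles instead. There are at most $3Ca^2e(G)$ such edges. Colouring all of them with colour $1$ still leaves every triangle with all three edges coloured $1$, hence monochromatic. The remaining edges lie in no triangle and can be coloured arbitrarily. This gives $\operatorname{gc}_3\ge 3^{(1-3Ca^2)e(G)}$, and I choose $a$ small in terms of $\delta$.

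\emph{Dense regime, $p\ge An^{-1/2}$.} Here I would use hypergraph containers, applied to the $3$-uniform hypergraph on vertex set $E(K_n)\times[3]$ whose hyperedges are the rainbow triangles. Its codegree conditions are governed by $m_2(K_3)=2$, so at $p\ge An^{-1/2}$ the balanced supersaturation yields a family of $2^{o(pn^2)}$ containers, i.e.\ templates $\cP$ on $K_n$. Each container has few rainbow triangles, say at most $\eps n^3$ of them, and every Gallai colouring of $G(n,p)$ is compatible with one container restricted to $G$.

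The number of colourings of $G$ compatible with $\cP$ is $\prod_{e\in E(G)}|\cP(e)|$. Two ingredients then control this.
\begin{enumerate}
\item A deterministic stability/entropy statement: a template with $o(n^3)$ rainbow triangles has at most $o(n^2)$ edges with palette size $3$, by removal plus the fact that $3$-palette edges generate rainbow triangles. Equivalently, $\lambda(K_3,3)=\log 2$ with robustness.
\item A concentration step: with high probability, for every set $S$ of $o(n^2)$ pairs that can occur as the $3$-palette set of a container, $|S\cap E(G)|=o(pn^2)$. This needs a union bound against $2^{o(pn^2)}$ containers together with Chernoff bounds.
\end{enumerate}
Summing over containers gives $\operatorname{gc}_3(G)\le 2^{o(pn^2)}\,2^{e(G)}\,3^{o(pn^2)}$, which is at most $2^{(1+\delta)e(G)}$.

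The main obstacle is the container step: establishing the balanced supersaturation for rainbow triangles inside templates, uniformly enough that the container count is $2^{o(pn^2)}$ exactly at the threshold $p=An^{-1/2}$. For this I would first try the standard Balogh--Morris--Samotij/Saxton--Thomason codegree bound, using $\Delta_2\lesssim n$ and $\Delta_1\sim n^2$ on the rainbow-triangle hypergraph.
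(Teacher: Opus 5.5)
\textbf{Verdict: genuine gap, in the dense half.} Your route there differs from the paper's, and as written it does not go through. The paper applies multicolour sparse regularity to the colour classes. It then uses the K{\L}R lemma (\cref{lem:embedding}) to show that the reduced palettes form a $K_3$-SDR-free partial template, and bounds that template's entropy by the Hall inequality $M_3(3)=8=2^3$ (\cref{lem:hallgeneral,prop:hall-stability}). Your container version has two concrete problems.

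(i) \emph{Step~1 is false for containers, because containers may have empty palettes.} Put $\{1,2,3\}$ on the edges of $K_{n/2,n/2}$ and $\emptyset$ on all pairs inside the parts. This set is independent in your hypergraph, so it contains no rainbow triangle, yet $n^2/4$ pairs carry three colours. Your cherry count only works when the closing pairs have nonempty palettes. In the paper this is automatic: each regular pair carries at least $(1-\xi)pu^2$ edges of $G$, so some colour passes the threshold $\alpha pu^2$. In the container setting you must use that a container holding a colouring of $G$ is nonempty on all of $E(G)$. You then need to prove that, with high probability, this forces $o(n^2)$ empty pairs.

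(ii) \emph{The container count is not $2^{o(pn^2)}$ off the shelf.} Your hypergraph has degrees $2(n-2)$ and codegrees at most $1$, not $\Delta_2\lesssim n$, so no supersaturation is needed. The $\Delta_3$ condition forces $\tau\asymp n^{-1/2}$, and the lemma then only guarantees $\exp(O(n^{3/2}\log n))$ containers. That is larger than $2^{pn^2}$ at $p=An^{-1/2}$. Restricting to fingerprints $S\subseteq E(G)\times[3]$ does give $2^{o(pn^2)}$, but then the containers depend on $G$, so step~2 cannot be a plain union bound over containers.

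What works is a weighted union bound. For each fingerprint with underlying edge set $U(S)$, multiply $p^{|U(S)|}$ by the Chernoff tail for the independent edges outside $U(S)$. For (i), use instead $(1-p)^{\gamma n^2-|U(S)|}$, the probability of avoiding $\gamma n^2$ empty pairs. Then sum over $S$, using $\sum_{u\le cn^{3/2}}\binom{\binom n2}{u}7^up^u\le\exp\bigl(O((c/A)\log(A/c))\,pn^2\bigr)$; the tails beat this once $A$ is large. With these repairs the container route works and avoids regularity. The paper sidesteps both issues: its regularity data number only $(M+1)^n2^{O(M^2)}=e^{O(n)}$, and its palettes are nonempty by construction.

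\textbf{Sparse half: same as the paper, with one fix.} This is the paper's below-threshold argument with $F=K_3$. You put one colour on triangle edges rather than $e(F)-1=2$; either suffices. However, Markov's inequality does not give a high-probability bound. For $p=cn^{-1/2}$ with $c\le a$ one has $\mathbb E T=\Theta(pn^2)$, so Markov only bounds $\mathbb P(T>\rho\,e(G))$ by a constant of order $a^2/\rho$, not by $o(1)$. Use the second moment as in \cref{lem:fewcore}: $\operatorname{Var}T=O(n^3p^3+n^4p^5)=o\bigl((pn^2)^2\bigr)$ when $pn^2\to\infty$. When $pn^2=O(1)$, there are with high probability no triangles at all.
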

	
	In the two ranges of the theorem, the edge-normalised logarithmic count changes from $\log 3$ to $\log 2$.  Below $n^{-1/2}=n^{-1/m_2(K_3)}$ by a sufficiently small constant factor, one can leave all but an arbitrarily small proportion of the edges unconstrained; above this scale by a sufficiently large constant factor, the leading exponential base is that of the two-colour construction.  The maximum $2$-density
	\[
	m_2(H):=\max_{\substack{F\subseteq H\\v(F)\ge3}}
	\frac{e(F)-1}{v(F)-2}
	\]
	is the parameter that governs the sparse embedding threshold used here.  A graph $F$ with $v(F)\ge3$ is \emph{$2$-balanced} if
	\[
	\frac{e(J)-1}{v(J)-2}
	\le \frac{e(F)-1}{v(F)-2}
	\qquad\text{for every }J\subseteq F\text{ with }v(J)\ge3.
	\]
	Thus every subgraph $F\subseteq H$ attaining $m_2(H)$ is $2$-balanced.  In particular, the Kohayakawa--{\L}uczak--R\"odl (K{\L}R) theorem embeds regular $H$-partite configurations in $G(n,p)$ at the scale $n^{-1/m_2(H)}$ \cite{CGSS,GerkeSteger}.  This suggests asking whether the Gallai transition extends from triangles to every fixed forbidden rainbow graph at the same scale.
	\iffalse
	Two difficulties arise beyond the three-colour triangle case.  First, if $\ell\ge e(H)$, one must control all possible Hall obstructions among the $e(H)$ palettes on each copy of $H$; several different sizes of obstructing subfamilies may compete.  Second, for large $\ell$, the template that assigns the same $(e(H)-1)$-set of colours to every edge need not attain the optimal limiting entropy density: templates based on multipartite graphs avoiding an edge-deleted copy of $H$ can use all colours on a positive proportion of the pairs.  The random transference must therefore be separated from the complete-graph entropy--Tur\'an problem.
	\fi
	
	Unless another limiting variable is displayed explicitly, all asymptotic statements are taken as $n\to\infty$, and ``with high probability'' means with probability tending to one.  Throughout, $H$ is a fixed simple graph; isolated vertices play no role and are deleted.  For $S\subseteq E(H)$, the notation $H-S$ denotes the spanning subgraph obtained by deleting the edges in $S$, and $H-e:=H-\{e\}$.  We assume that $e(H)\ge3$ and that $H$ is not a matching, equivalently that $H$ contains two adjacent edges.  Then $m_2(H)\ge1$, and we fix an integer $\ell\ge e(H)$.
	The terms \emph{below-threshold} and \emph{above-threshold} refer, respectively, to probabilities bounded by a sufficiently small constant multiple and bounded below by a sufficiently large constant multiple of $n^{-1/m_2(H)}$.
	
	We call a complete-graph template \emph{$H$-SDR-free} if the palettes on every copy of $H$ have no system of distinct representatives.  Thus the term refers only to templates; colourings themselves will always be called rainbow-$H$-free.  We call the limiting maximum entropy per edge over $H$-SDR-free templates the \emph{rainbow entropy--Tur\'an density} and denote it by $\lambda(H,\ell)$.  Its formal definition and the existence of the limit are given in \cref{sec:deterministic-entropy}.  The main theorem shows that this parameter is the edge-normalised exponential rate in the above-threshold random-graph regime.
	
	\begin{theorem}\label{thm:lambda-transfer}
		Let $H$ be a fixed non-matching graph with $e(H)\ge3$, and fix an integer $\ell\ge e(H)$.  For every $\delta>0$ there are constants $a=a(H,\ell,\delta)>0$ and $A=A(H,\ell,\delta)>0$ such that, for $G=G(n,p)$, with high probability
		\[
		\begin{cases}
			\ell^{(1-\delta)e(G)}\le R_{H,\ell}(G)\le\ell^{e(G)},
			& p\le a n^{-1/m_2(H)},\\[1ex]
			\exp\bigl((\lambda(H,\ell)-\delta)e(G)\bigr)
			\le R_{H,\ell}(G)
			\le \exp\bigl((\lambda(H,\ell)+\delta)e(G)\bigr),
			& p\ge A n^{-1/m_2(H)}.
		\end{cases}
		\]
	\end{theorem}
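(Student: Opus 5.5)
We prove the two regimes separately. Throughout, $m=m_2(H)$, and we fix a subgraph $F\subseteq H$ attaining $m_2(H)$; it is $2$-balanced.

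\emph{Below threshold.} The upper bound $R_{H,\ell}(G)\le\ell^{e(G)}$ is trivial. For the lower bound, let $X$ be the number of copies of $F$ in $G=G(n,p)$ with $p\le an^{-1/m}$. Then
\[
\mathbb E X=\Theta\bigl(n^{v(F)}p^{e(F)}\bigr)=\Theta\bigl(n^2p\cdot (n^{v(F)-2}p^{e(F)-1})\bigr)\le O\bigl(a^{e(F)-1}\bigr)\,n^2p ,
\]
since $n^{v(F)-2}p^{e(F)-1}\le a^{e(F)-1}$ by the choice of $F$. When $n^2p\to\infty$, Markov's inequality and the concentration of $e(G)$ show that whp $X\le \eta\, e(G)$, where $\eta\to0$ as $a\to0$. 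When $n^2p$ is bounded, one argues directly: whp $G$ is a forest plus a few edges, and the count $R_{H,\ell}(G)$ is handled by hand. Choose a set $D$ of edges hitting every copy of $F$, with $|D|\le X$. Then $G-D$ is $F$-free and hence $H$-free. Colour $D$ with the single colour $1$ and colour $E(G)\setminus D$ arbitrarily. Every copy of $H$ in $G$ contains a copy of $F$, hence meets $D$. This alone does not prevent rainbow copies, because only one edge of $D$ might lie in the copy. The fix is to give every edge of $D$ and every edge in a copy of $F$ through $D$ the same colour. The number of edges lying in copies of $F$ is at most $e(F)X$, so this fixes at most $e(F)X\le e(F)\eta\,e(G)$ edges. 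Since $e(H)\ge3$ (or $H$ contains two adjacent edges of $F$), every copy of $H$ then contains two edges of the same colour and is not rainbow. Hence $R_{H,\ell}(G)\ge \ell^{(1-e(F)\eta)e(G)}$, and we choose $a$ so that $e(F)\eta\le\delta$. If $F$ is a single edge the argument must be redone with $F=P_3$; note $m\ge1$ here.

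\emph{Above threshold, lower bound.} Take a near-optimal $H$-SDR-free template $\mathcal T$ on $K_k$, with $k$ fixed large, of entropy at least $(\lambda-\delta/3)\binom k2$. Partition $[n]$ uniformly into $k$ parts. Blowing up $\mathcal T$, give each edge of $G$ between parts $i\neq j$ the palette $\mathcal T(ij)$, and colour edges inside parts with colour $1$. Inside-part edges form only a $1/k$ fraction of $E(G)$. A copy of $H$ whose vertices lie in distinct parts sees palettes from a copy of $H$ in $K_k$, which have no SDR, so the copy is not rainbow. Copies with two vertices in the same part are the problem: the blow-up is not $H$-SDR-free for homomorphic images. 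Resolve this by using the limit definition of $\lambda$ on blow-ups; the paper's deterministic section presumably shows that the supremum is attained by templates closed under blow-up, or one chooses $\mathcal T$ from its Zykov-symmetrised extremal templates. Concentration of the edge counts between parts then gives $\exp((\lambda-\delta)e(G))$.

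\emph{Above threshold, upper bound, which is the main obstacle.} Apply the sparse regularity lemma to the $\ell$ colour classes of an arbitrary colouring $\chi$, obtaining a cluster graph together with colour densities $d_c(V_iV_j)$. Define a template on the cluster graph $K_k$ by letting the palette of $ij$ be the set of colours with density above $\eps p$. The sparse KŁR embedding (the counting lemma for $G(n,p)$ at $p\ge An^{-1/m}$, whp for all regular subgraphs simultaneously) shows the following: if this template has an SDR on some copy of $H$, then $\chi$ has a rainbow copy of $H$. Hence the reduced template is $H$-SDR-free, with entropy at most $(\lambda+o(1))\binom k2$.

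Counting then proceeds as follows. There are $\exp(o(e(G)))$ choices of partition, reduced template, and irregular/exceptional edge sets, using $\binom{e(G)}{\eps e(G)}$ together with $pn^2$-scale edge counts. Given these, the number of colourings is at most
\[
\prod_{ij}|\mathcal P(ij)|^{e_G(V_i,V_j)}\cdot \ell^{O(\eps)e(G)}\le \exp\bigl((\lambda+\delta)e(G)\bigr).
\]
This uses $e_G(V_i,V_j)\approx p|V_i||V_j|$ whp.

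The delicate points are twofold. First, the KŁR statement needed is for multicoloured $H$ with prescribed colours on each edge; this is the standard multi-colour KŁR form. Second, the exceptional sets must be entropy-negligible, which requires all sparse regular partitions of all colourings to be counted with only $2^{o(pn^2)}$ overhead. For this I would rely on the container/KŁR counting version of Conlon--Gowers--Samotij--Schacht.
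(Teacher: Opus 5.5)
Your below-threshold colouring and your upper-bound strategy (multicolour sparse regularity, palettes of colours with non-negligible relative density, K{\L}R to make the reduced template $H$-SDR-free, then counting) essentially follow the paper. The above-threshold lower bound, however, has a genuine gap.

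\textbf{The lower bound.} A blow-up of an $H$-SDR-free template on $K_k$ is in general not rainbow-$H$-free. Distinct edges of $G$ in the same cluster pair may receive different colours from the same palette, and a copy of $H$ with several vertices in one part sees only a non-injective image of $H$ in $K_k$. For example, for $H=C_4$, the template on $K_k$ giving $[\ell]$ to one pair and $\{1\}$ to every other pair is $C_4$-SDR-free. Yet its blow-up admits rainbow $4$-cycles inside that single cluster pair. Your specific construction already fails for $H=K_3$, $\ell=3$, with the optimal constant template $\{2,3\}$: a triangle with two vertices in one part gets colour $1$ on its inside edge and can get $2,3$ on its cross edges. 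The repair you suggest (extremal templates closed under blow-up, Zykov symmetrisation) is not proved in the paper and is not a standard fact for palette templates.

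The missing observation is that no blow-up is needed. Since $\lambda(H,\ell)$ is the infimum of $\operatorname{ex}_{\log}(n;H,\ell)/\binom n2$, an optimal $H$-SDR-free template $\cP_n$ on $K_n$ itself satisfies $\sum_e\log|\cP_n(e)|\ge\lambda(H,\ell)\binom n2$. Every colouring of $G(n,p)$ with $\chi(e)\in\cP_n(e)$ is rainbow-$H$-free, because each copy of $H$ in $G$ is an injective copy in $K_n$. Hence $\log R_{H,\ell}(G)\ge Z:=\sum_{e}\mathbf{1}_{\{e\in G\}}\log|\cP_n(e)|$. This is a sum of independent variables bounded by $\log\ell$, with mean at least $p\lambda(H,\ell)\binom n2$ and variance $O(pn^2)$, so Bernstein's inequality gives the lower bound.

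\textbf{Other steps needing repair.}
\begin{itemize}
\item \emph{Below threshold, concentration.} Markov's inequality does not give a high-probability statement. At $p=an^{-1/m_2(H)}$ one has $\mathbb E X_F=\Theta(a^{e(F)-1}n^2p)$, so Markov bounds $\mathbb P(X_F>\eta\,e(G))$ only by a constant of order $a^{e(F)-1}/\eta$, not by $o(1)$. The paper instead bounds $\operatorname{Var}X_F$ by summing over the edge-intersections $J\subseteq F$ of overlapping copies. It uses the $2$-balancedness of $F$ to get $\operatorname{Var}X_F=o((n^2p)^2)$ and then applies Chebyshev. With this, your final colouring is fine: one colour on every edge lying in a copy of $F$ is the paper's construction with one colour in place of $e(F)-1$.
\item \emph{Upper bound, irregular pairs.} K{\L}R only certifies SDR-freeness for copies of $H$ in $K_k$ that avoid the irregular pairs. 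So the reduced object is a partial template, and the bound $(\lambda+o(1))\binom k2$ does not follow from the definition of $\lambda$. The paper supplies it by \cref{prop:lambda-partial}(2), averaging over random $t$-subsets that avoid the at most $\eta k^2$ omitted pairs.
\item \emph{Upper bound, order of parameters.} The palette density threshold $\alpha$ must be fixed first, and the regularity parameter chosen below $\eps_0(H,\alpha)$. If your threshold $\eps p$ uses the regularity parameter itself, K{\L}R cannot be invoked.
\item \emph{Upper bound, enumeration overhead.} No container or counting-version input is needed. There are at most $(M+1)^n2^{O(M^2)}$ choices of regularity data, and $pn^2\ge An$ makes this $\exp(\nu pn^2)$, with $\nu$ as small as desired once $A$ is large.
\end{itemize}
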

	
	Thus the theorem determines the edge-normalised exponential rate in the below-threshold and above-threshold ranges.  In the latter range, the rate is reduced to a deterministic complete-graph template problem with fixed $\ell$ that no longer involves $p$.  We refer to this identification of the random-graph rate with the complete-graph template density as \emph{entropy transference}.
	
	An exact multiplicative form of Hall's obstruction determines this density whenever
	$e(H)\le\ell\le(e(H)-1)^{e(H)/(e(H)-2)}$: the template that assigns one fixed $(e(H)-1)$-set to every edge is extremal.  We call this interval the \emph{Hall range}; it is universal in the sense that it depends on $H$ only through $e(H)$.  Combining this fact with \cref{thm:lambda-transfer} gives the following consequence.
	
	\begin{corollary}\label{cor:hall-transition}
		Let $H$ be a fixed non-matching graph with $e(H)\ge3$, and let $e(H)\le\ell\le(e(H)-1)^{e(H)/(e(H)-2)}$ be a fixed integer.  For every $\delta>0$ there are constants $a=a(H,\ell,\delta)>0$ and $A=A(H,\ell,\delta)>0$ such that, for $G=G(n,p)$, with high probability
		\[
		\begin{cases}
			\ell^{(1-\delta)e(G)}\le R_{H,\ell}(G)\le\ell^{e(G)},
			& p\le a n^{-1/m_2(H)},\\[1ex]
			(e(H)-1)^{e(G)}\le R_{H,\ell}(G)\le(e(H)-1)^{(1+\delta)e(G)},
			& p\ge A n^{-1/m_2(H)}.
		\end{cases}
		\]
	\end{corollary}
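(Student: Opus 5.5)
The corollary follows from \cref{thm:lambda-transfer} once we know that $\lambda(H,\ell)=\log(e(H)-1)$ throughout the Hall range. The below-threshold line is copied verbatim from the theorem. For the above-threshold line, the upper bound $R_{H,\ell}(G)\le (e(H)-1)^{(1+\delta)e(G)}$ needs a small adjustment. The theorem gives $\exp((\lambda+\delta')e(G))$. Choose $\delta'=\delta\log(e(H)-1)$, which is positive since $e(H)\ge3$, and take $A=A(H,\ell,\delta')$.

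The lower bound $(e(H)-1)^{e(G)}$ is exact and needs no randomness. Fix a set $C\subseteq[\ell]$ with $|C|=e(H)-1$. Every colouring $\chi:E(G)\to C$ is rainbow-$H$-free, because a rainbow copy of $H$ would need $e(H)$ distinct colours. There are $(e(H)-1)^{e(G)}$ such colourings, and the bound holds for every graph $G$.

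The substance is therefore the identity $\lambda(H,\ell)=\log(e(H)-1)$ for $e(H)\le\ell\le(e(H)-1)^{e(H)/(e(H)-2)}$. The bound $\lambda\ge\log(e(H)-1)$ comes from the constant template with palette $C$. It is $H$-SDR-free, since the $e(H)$ palettes on any copy of $H$ have union of size $e(H)-1<e(H)$.

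For the upper bound, write $k=e(H)$ and let $\cP$ be an $H$-SDR-free template on $K_n$. I would average over copies of $H$ in $K_n$. By symmetry of $K_n$, $\Ent(\cP)/\binom n2$ equals the average over uniformly random copies of $H$ of $\frac1k\sum_{e\in H}\log|\cP(e)|$. It therefore suffices to prove a local claim: any $k$ nonempty subsets $P_1,\dots,P_k$ of $[\ell]$ with no SDR satisfy $\prod_i|P_i|\le(k-1)^k$.

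By Hall's theorem there is a nonempty $I$, with $|I|=j$, such that $|\bigcup_{i\in I}P_i|\le j-1$. Then each $|P_i|\le j-1$ for $i\in I$, while each $|P_i|\le\ell$ for $i\notin I$. This gives
\[
\prod_i|P_i|\le (j-1)^j\,\ell^{k-j}.
\]
The case $j=1$ is impossible because palettes are nonempty. For $2\le j\le k$ we need $(j-1)^j\ell^{k-j}\le(k-1)^k$, that is, $\ell\le\bigl((k-1)^k/(j-1)^j\bigr)^{1/(k-j)}$.

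Set $f(x)=x\log x$. The right-hand side is $\exp\bigl((f(k-1)-f(j-1))/(k-j)\bigr)$, a secant slope of the convex function $f$. This slope is increasing in $j$, so the binding constraint is $j=2$, where it equals $(k-1)^{k/(k-2)}$. That is exactly the Hall range, so the local claim holds there, and therefore $\lambda(H,\ell)\le\log(e(H)-1)$.

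I expect this convexity and Hall case analysis to be the only real step, and it is routine. If the paper has already proved $\lambda(H,\ell)=\log(e(H)-1)$ in the Hall range as a stated proposition, I would simply cite it. The rest is bookkeeping of constants.
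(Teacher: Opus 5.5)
Your proposal follows the paper's route. The paper derives the corollary from \cref{thm:lambda-transfer} with the additive error $\delta\log(e(H)-1)$ and uses the constant $(e(H)-1)$-palette for the exact lower bound. It obtains $\lambda(H,\ell)=\log(e(H)-1)$ from \cref{cor:hall-lambda}, which is proved by your averaging over copies of $H$ (the double count in \cref{prop:hall-stability} with $X=\emptyset$) together with the Hall product bound of \cref{lem:hallgeneral,prop:localthreshold}.

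There is one computational error in the step you call the only real one. With $f(x)=x\log x$,
\[
f(k-1)-f(j-1)=(k-1)\log(k-1)-(j-1)\log(j-1),
\]
which is not $k\log(k-1)-j\log(j-1)$. At $j=2$ your formula gives $(k-1)^{(k-1)/(k-2)}$, not the threshold $(k-1)^{k/(k-2)}$ you then state. This is not just relabelling. The discrepancy $\log(k-1)-\log(j-1)$ is a secant difference of the concave function $\log$, whose slopes decrease in $j$. So convexity of $x\log x$ alone does not give the monotonicity you need.

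The correct function is $g(y)=(y+1)\log y$. The constraint then reads $\log\ell\le\bigl(g(k-1)-g(j-1)\bigr)/\bigl((k-1)-(j-1)\bigr)$. Since $g''(y)=(y-1)/y^2\ge0$ for $y\ge1$, $g$ is convex on the relevant range $y=j-1\ge1$, so this secant slope is nondecreasing in $j$. Because $g(1)=0$, its value at $j=2$ is $k\log(k-1)/(k-2)$, as required. This is the same monotonicity the paper proves by showing that $x\log(x-1)/(x-2)$ is increasing.

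The case $j=k$ is trivial and should be excluded before dividing by $k-j$. With these corrections your proof is complete.
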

	
	At the transition scale, $e(G(n,p))=(1+o(1))p\binom n2$ with high probability, so the edge-normalised statements are also equivalent to formulations using $p\binom n2$.  Taking $\ell=e(H)$ gives the least nontrivial number of available colours, since no copy of $H$ can be rainbow when $\ell<e(H)$; taking $H=K_3$ and $\ell=3$ recovers \cref{thm:random-gallai}.  The same two-regime conclusion therefore holds for every non-matching $H$ and every fixed integer $\ell$ in the Hall range, with threshold scale $n^{-1/m_2(H)}$.
	
	Below the upper endpoint of the Hall range, the same template satisfies the following counting-stability statement.  This is not a pointwise classification: the theorem bounds the number of rainbow-$H$-free colourings that are far from every fixed $(e(H)-1)$-set of colours.  For $\chi:E(G)\to[\ell]$, define
	\[
	d_{e(H)-1}(\chi)
	:=\min_{B\in\binom{[\ell]}{e(H)-1}}
	|\{e\in E(G):\chi(e)\notin B\}|.
	\]
	Thus $d_{e(H)-1}(\chi)$ is the minimum number of edges whose colours must be changed so that the resulting colouring uses at most $e(H)-1$ colours.
	
	\begin{theorem}\label{thm:stability}
		Let $H$ be a fixed non-matching graph with $e(H)\ge3$, and let $\ell$ be a fixed integer satisfying $e(H)\le\ell<(e(H)-1)^{e(H)/(e(H)-2)}$.  For every $\eps>0$ there are constants $A=A(H,\ell,\eps)>0$ and $\gamma=\gamma(H,\ell,\eps)>0$ such that, if $p\ge A n^{-1/m_2(H)}$, then with high probability
		\[
		\bigl|\{\chi\in\cR_{H,\ell}(G(n,p)):
		d_{e(H)-1}(\chi)\ge\eps e(G(n,p))\}\bigr|
		\le(e(H)-1)^{(1-\gamma)e(G(n,p))}.
		\]
		Consequently, the number of such colourings is $o(R_{H,\ell}(G(n,p)))$.
	\end{theorem}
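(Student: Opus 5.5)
We prove \cref{thm:stability} by the same container-and-regularity transference that underlies the upper bound in \cref{thm:lambda-transfer}, strengthened by a deterministic stability statement for complete-graph templates. The first ingredient is a \emph{template stability lemma} on $K_n$. Assume $e(H)\le\ell<(e(H)-1)^{e(H)/(e(H)-2)}$ and let $\cP$ be an $H$-SDR-free template on $K_m$ with $m$ large and
\[
\Ent(\cP)\ge(\log(e(H)-1)-\eta)\binom m2 .
\]
Then there should be a set $B\in\binom{[\ell]}{e(H)-1}$ such that all but $\eps'\binom m2$ edges $e$ satisfy $\cP(e)=B$. To prove this we reuse the multiplicative Hall inequality that gives the Hall range. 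On each copy of $H$ it gives a pointwise bound of the form $\sum_{e}\log|\cP(e)|\le e(H)\log(e(H)-1)$, after averaging over copies. The strict inequality $\ell<(e(H)-1)^{e(H)/(e(H)-2)}$ makes this bound strict, by a definite margin, unless almost every copy of $H$ carries palettes that are all equal to a single $(e(H)-1)$-set.

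Supersaturation then passes from ``most copies are uniform'' to ``one global $B$''. Take two adjacent edges whose palettes are $(e(H)-1)$-sets. If these sets differed, then the copies of $H$ through both edges would almost all be non-uniform. Since $H$ is connected through adjacent edges, the palettes propagate along paths, and this yields a single $B$. This is the step we expect to be the main obstacle: extracting a quantitative gap from the Hall obstruction near the endpoint of the Hall range. We would first try a compactness argument, using the fact that the optimisation over palette-size vectors on one copy of $H$ has the uniform $(e(H)-1)$ configuration as its unique maximiser strictly inside the range, and then remove exceptional copies with the removal lemma.

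Next we transfer to $G(n,p)$. The hypergraph container theorem, applied to the $e(H)$-uniform hypergraph of rainbow copies of $H$ in $G(n,p)$ (on vertex set $E(G)\times[\ell]$), or equivalently a sparse regularity approach, is valid for $p\ge An^{-1/m_2(H)}$ by the K{\L}R embedding. It gives a family of $\exp(o(e(G)))$ containers. Each container is a sub-template of $G$, and after sparse regularisation its reduced template on the cluster graph is (nearly) $H$-SDR-free. A colouring with $d_{e(H)-1}(\chi)\ge\eps e(G)$ lies in some container $C$.

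We then split the containers into two cases. If the reduced template of $C$ has entropy density at most $\log(e(H)-1)-\eta$, then $C$ admits at most $(e(H)-1)^{(1-2\gamma)e(G)}$ colourings, using edge-distribution regularity of $G(n,p)$. Otherwise the stability lemma gives a set $B$ such that the palettes of $C$ equal $B$ on all but $\eps' e(G)$ edges. A colouring in $C$ that is $\eps$-far from $B$ must then use a colour outside $B$ on at least $(\eps-\eps')e(G)$ edges where the palette is only $B$, which is impossible. The alternative is that $C$ fails to be close to uniform, and this case is absorbed into the first.

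Summing over the $\exp(o(e(G)))$ containers gives the bound $(e(H)-1)^{(1-\gamma)e(G)}$. The ``consequently'' clause follows from the lower bound $R_{H,\ell}(G)\ge(e(H)-1)^{e(G)}$ in \cref{cor:hall-transition}.
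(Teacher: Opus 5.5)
Your proposal follows essentially the same route as the paper. Your template stability lemma is \cref{prop:hall-stability}: the local Hall gap averaged over embeddings of $H$, then propagation across adjacent edges, which the paper makes quantitative with \cref{lem:mixing}. The step you expect to be the main obstacle is immediate, because for fixed $\ell$ there are only finitely many palette tuples, which gives the uniform gap \eqref{eq:localgap}. Your transference-plus-dichotomy step is the paper's argument.

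The one simplification is that the paper uses no containers or removal lemma. It regularises each rainbow-$H$-free colouring directly, so the reduced template, with irregular cluster pairs omitted (hence the partial-template form of \cref{prop:hall-stability}), is exactly $H$-SDR-free by the K{\L}R embedding lemma. The at most $(M+1)^n2^{O_\ell(M^2)}$ possible regularity data then play the role of your containers. This also avoids having to justify a container theorem on a hypergraph built from $G(n,p)$, which the K{\L}R embedding lemma alone does not provide.
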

	
	\subsection{The rainbow entropy--Tur\'an density}
	
	We next give explicit graph-theoretic information about $\lambda(H,\ell)$.  For $1\le j\le e(H)-1$, define
	\[
	\chi_j(H):=\min_{\substack{S\subseteq E(H)\\|S|=j}}\chi(H-S)
	\]
	and
	\[
	\pi_j(H):=
	\begin{cases}
		1-\dfrac1{\chi_j(H)-1},&\chi_j(H)\ge3,\\[1.2ex]
		0,&\chi_j(H)\le2.
	\end{cases}
	\]
	For a finite graph family $\mathcal F$, let $\operatorname{ex}(n,\mathcal F)$ denote the maximum number of edges in an $n$-vertex graph containing no member of $\mathcal F$, and call
	$\lim_{n\to\infty}\operatorname{ex}(n,\mathcal F)/\binom n2$ its \emph{Tur\'an density}.  Thus $\pi_j(H)$ is the Tur\'an density of the finite family
	\[
	\mathcal H_j^-:=\{H-S:S\subseteq E(H),\ |S|=j\}.
	\]
	We write $\mathcal H^-=\mathcal H_1^-$ and $\pi_-(H)=\pi_1(H)$, and refer to the sequence $(\pi_j(H))_{1\le j\le e(H)-1}$ as the \emph{edge-deletion density profile} of $H$.  Templates constructed from $\mathcal H_j^-$-free graphs and an Erd\H{o}s--Stone--Simonovits upper bound give the following lower and upper estimates, respectively.
	
	\begin{theorem}\label{thm:deletion-bounds}
		For every fixed integer $\ell\ge e(H)$,
		\begin{equation}\label{eq:lambda-lower-profile}
			\lambda(H,\ell)\ge
			\max_{1\le j\le e(H)-1}
			\bigl(\pi_j(H)\log \ell+(1-\pi_j(H))\log j\bigr),
		\end{equation}
		and
		\begin{equation}\label{eq:lambda-upper-minus}
			\lambda(H,\ell)
			\le
			\pi_-(H)\log \ell+\bigl(1-\pi_-(H)\bigr)\log(e(H)-1).
		\end{equation}
		Consequently,
		\[
		\lim_{\ell\to\infty}\frac{\lambda(H,\ell)}{\log \ell}=\pi_-(H).
		\]
		Moreover, $\lambda(H,\ell)=\log(e(H)-1)$ for every integer $\ell\ge e(H)$ if and only if $H-e$ is bipartite for some $e\in E(H)$.
	\end{theorem}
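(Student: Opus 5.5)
Throughout I use $\lambda(H,\ell)=\lim_n \max\{\Ent(\cP)/\binom n2\}$ over $H$-SDR-free templates $\cP$ on $K_n$.

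\emph{Lower bound \eqref{eq:lambda-lower-profile}.} Fix $j$ with $1\le j\le e(H)-1$ and an extremal $\mathcal H_j^-$-free graph $G_n$ with $(\pi_j(H)+o(1))\binom n2$ edges. When $\chi_j(H)\ge3$ I take the balanced $(\chi_j(H)-1)$-partite Tur\'an graph; when $\pi_j=0$ I take the empty graph. Define $\cP(e)=[\ell]$ for $e\in E(G_n)$ and $\cP(e)=[j]$ otherwise.

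Consider a copy of $H$ in $K_n$ and let $S$ be its set of edges lying outside $G_n$. Since the copy minus $S$ lies in $G_n$, which contains no member of $\mathcal H_{j}^-$, it is not of the form $H-S'$ with $|S'|=j$. If $|S|\le j$, pick any $S'\supseteq S$ with $|S'|=j$; then $H-S'\subseteq H-S\subseteq G_n$. This is a contradiction, because $\mathcal H_j^-$-freeness forbids subgraphs and is therefore monotone. Hence $|S|\ge j+1$, and those $|S|$ edges have palettes inside $[j]$. By Hall's condition they admit no SDR, so $\cP$ is $H$-SDR-free. Its entropy is $(\pi_j+o(1))\binom n2\log\ell+(1-\pi_j+o(1))\binom n2\log j$. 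Maximising over $j$ gives the bound.

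\emph{Upper bound \eqref{eq:lambda-upper-minus}.} Let $\cP$ be $H$-SDR-free on $K_n$. Let $F$ be the graph of edges with $|\cP(e)|\ge e(H)$, so every other edge has $|\cP(e)|\le e(H)-1$. The key claim is that $F$ contains no copy of any $H-e$.

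Suppose $F$ contained $H-e$ with $e=uv$, and consider the completed copy of $H$ in $K_n$. Edge $uv$ has a palette of size at least $1$, and each of the other $e(H)-1$ edges has a palette of size at least $e(H)$. Choose a colour for $uv$ first, then greedily choose distinct colours for the remaining edges; at each step at most $e(H)-1$ colours are already used. This yields an SDR, a contradiction.

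Hence $e(F)\le\operatorname{ex}(n,\mathcal H^-)=(\pi_-(H)+o(1))\binom n2$ by Erd\H{o}s--Stone--Simonovits for families, whose density is governed by the minimum chromatic number $\chi_1(H)$. Therefore
\[
\Ent(\cP)\le e(F)\log\ell+\Bigl(\tbinom n2-e(F)\Bigr)\log(e(H)-1).
\]
This bound is increasing in $e(F)$, so it gives \eqref{eq:lambda-upper-minus} after dividing by $\binom n2$ and letting $n\to\infty$.

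\emph{Asymptotics in $\ell$.} Dividing both bounds by $\log\ell$ and taking $j=1$ in the lower bound (where $\log 1=0$) gives
\[
\pi_1(H)\le\liminf_{\ell\to\infty}\frac{\lambda(H,\ell)}{\log\ell}\le\limsup_{\ell\to\infty}\frac{\lambda(H,\ell)}{\log\ell}\le\pi_-(H)=\pi_1(H).
\]

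\emph{Characterisation.} If $H-e$ is bipartite for some $e$, then $\chi_1(H)\le2$, so $\pi_-=0$. The upper bound gives $\lambda\le\log(e(H)-1)$, and the lower bound with $j=e(H)-1$ gives $\lambda\ge\log(e(H)-1)$ (in that case $\pi_j=0$ too). Conversely, if no $H-e$ is bipartite, then $\pi_1(H)=1-1/(\chi_1-1)\ge1/2>0$. Using $j=1$ for large $\ell$ gives $\lambda\ge\pi_1\log\ell>\log(e(H)-1)$ once $\ell>(e(H)-1)^{1/\pi_1}$, so the equality fails for some admissible $\ell$.

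\emph{Main obstacle.} The main thing to check is the lower-bound step: that the Hall obstruction requires $|S|\ge j+1$ edges with palettes in $[j]$, and the monotonicity of $\mathcal H_j^-$-freeness under enlarging $S$. Both are straightforward. A second point is that the extremal density of $\mathcal H_j^-$ is indeed $\pi_j(H)$ via Erd\H{o}s--Stone--Simonovits for families, which the paper has already asserted.
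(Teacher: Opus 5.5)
Your proposal is correct and follows the paper's proof essentially step for step: the same $\mathcal H_j^-$-free deletion template for the lower bound, the same observation that pairs with palettes of size at least $e(H)$ span an $\mathcal H^-$-free graph (combined with Erd\H{o}s--Stone--Simonovits) for the upper bound, and the same deductions of the limit and the characterisation. The differences are cosmetic. You take the Tur\'an graph $T_{\chi_j(H)-1}(n)$ explicitly instead of an extremal $\mathcal H_j^-$-free graph, and you build the representatives greedily instead of verifying Hall's condition.
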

	
	Together, the results determine $\lambda(H,\ell)$ throughout the Hall range, determine it for every $\ell\ge e(H)$ when $\pi_-(H)=0$ (equivalently, some $H-e$ is bipartite), and determine its leading term as $\ell\to\infty$.  Its value for the remaining pairs $(H,\ell)$ is a deterministic complete-graph template problem, discussed further in \cref{subsec:further-lambda}.
	
	We close the introduction with the main ideas of the proof.  In the below-threshold regime, choose a subgraph $F\subseteq H$ attaining $m_2(H)$.  If $p$ is at most a sufficiently small constant multiple of $n^{-1/m_2(H)}$, then, for every prescribed positive proportion, the constant can be chosen so that with high probability no more than that proportion of the edges of $G(n,p)$ lie in a copy of $F$.  Restrict those edges to $e(F)-1$ colours and colour all remaining edges freely with all $\ell$ colours.
	
	For the above-threshold regime, apply multicolour sparse regularity and the K{\L}R embedding theorem \cite{CGSS,GerkeSteger}.  Each cluster pair that is regular in every colour is assigned the palette of colours whose relative density exceeds a fixed positive threshold.  If the palettes on a reduced copy of $H$ had a system of distinct representatives, sparse embedding would produce a rainbow copy of $H$.  The random count is therefore reduced to the entropy--Tur\'an problem for $H$-SDR-free partial templates.  The exact local inequality
	\[
	\prod_{i=1}^{e(H)}|A_i|
	\le\max_{2\le s\le e(H)}(s-1)^s\ell^{e(H)-s}
	\]
	identifies the Hall range.  Below its upper endpoint, the strict gap from equality, combined with a mixing lemma, shows that a reduced template whose entropy differs from the upper bound by $o(m^2)$ assigns one fixed $(e(H)-1)$-palette to all but $o(m^2)$ pairs; this yields counting stability.  Finally, the edge-deletion density profile connects the remaining complete-graph problem to ordinary Tur\'an densities.
	
	\section{Preliminaries and local palette inequalities}
	
	All logarithms are natural unless a base is displayed.  For a graph $G$, write $v(G)$ and $e(G)$ for its numbers of vertices and edges.  For $S\subseteq V(G)$, let $e_G(S):=e(G[S])$, and for disjoint $U,W\subseteq V(G)$ let $e_G(U,W)$ be the number of edges with one endpoint in $U$ and the other in $W$.  We use $(x)_t=x(x-1)\cdots(x-t+1)$.  Whenever $\phi:V(H)\to[m]$ is an injection, the same symbol denotes the induced injection on edges, defined by
	\[
	\phi(uv):=\{\phi(u),\phi(v)\}\in E(K_m)
	\qquad (uv\in E(H)).
	\]
	
	\subsection{Sparse regularity and embedding}
	
	For disjoint vertex sets $X,Y$ in a graph $J$, define the relative $p$-density
	\[
	d_{J,p}(X,Y)=\frac{e_J(X,Y)}{p|X||Y|}.
	\]
	The pair $(X,Y)$ is $(\eps,p)$-\emph{regular} in $J$ if
	$|d_{J,p}(X',Y')-d_{J,p}(X,Y)|\le\eps$
	whenever $|X'|\ge\eps|X|$ and $|Y'|\ge\eps|Y|$.  A graph $J$ on $n$ vertices is \emph{$(\eta,p,D)$-upper-uniform} if
	$e_J(U,W)\le Dp|U||W|$
	for all disjoint sets $U,W\subseteq V(J)$ with $|U|,|W|\ge\eta n$.
	
	The above-threshold transference argument first requires one partition that is regular simultaneously for all colour classes.  We use the following form of the multicolour sparse regularity lemma; see, for example, \cite{Kohayakawa,GerkeSteger}.
	
	\begin{lemma}[Kohayakawa~\cite{Kohayakawa}; Gerke--Steger~\cite{GerkeSteger}]\label{lem:regularity}
		For every $\eps>0$, $D\ge1$, $m_0\in\N$ and $\ell\in\N$ there are $\eta>0$ and $M\ge m_0$ such that the following holds.  If $J_1,\ldots,J_\ell$ are spanning subgraphs of an $(\eta,p,D)$-upper-uniform graph $J$ on $n$ vertices, then there is a partition
		\[
		V(J)=V_0\cup V_1\cup\cdots\cup V_k
		\]
		with $m_0\le k\le M$, $|V_0|\le\eps n$, and $|V_1|=\cdots=|V_k|$, such that all but at most $\eps\binom{k}{2}$ pairs $(V_i,V_j)$ are $(\eps,p)$-regular in every $J_a$.
	\end{lemma}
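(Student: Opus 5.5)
We would prove \cref{lem:regularity} by the standard energy-increment argument of Szemer\'edi, adapted to relative densities. Upper-uniformity is used only to keep the energy bounded. Apply it to all colours at once, tracking one joint index.

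For an equipartition $\mathcal Q=\{V_0,V_1,\ldots,V_k\}$, define the index
\[
\operatorname{ind}(\mathcal Q):=\sum_{a=1}^{\ell}\sum_{1\le i<j\le k}\frac{|V_i||V_j|}{n^2}\,d_{J_a,p}(V_i,V_j)^2 .
\]
Suppose every non-exceptional part has size at least $\eta n$. Since $J_a\subseteq J$ and $J$ is $(\eta,p,D)$-upper-uniform, each $d_{J_a,p}(V_i,V_j)\le D$, so $\operatorname{ind}(\mathcal Q)\le \ell D^2/2$. Refining parts cannot decrease the index, by Cauchy--Schwarz (convexity of $x\mapsto x^2$ under averaging). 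This monotonicity also holds for the sub-cells created later, provided they still have size at least $\eta n$.

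Start from an arbitrary equipartition into $m_0$ parts. Suppose more than $\eps\binom k2$ pairs are not $(\eps,p)$-regular in some colour. For each such pair choose one offending colour $a$ and witnessing sets $X'\subseteq V_i$, $Y'\subseteq V_j$ with $|X'|\ge\eps|V_i|$, $|Y'|\ge\eps|V_j|$ and $|d_{J_a,p}(X',Y')-d_{J_a,p}(V_i,V_j)|>\eps$. Refine each $V_i$ by the Venn diagram of all its witness sets, giving at most $2^{k}$ cells per part. The defect form of Cauchy--Schwarz shows that each irregular pair contributes an increment of order $\eps^4|V_i||V_j|/n^2$. Summing over more than $\eps\binom k2$ pairs, the index increases by at least $c\,\eps^5$ for an absolute constant $c>0$. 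Then chop the cells into equal pieces of size about $|V_i|/4^{k}$ and put the leftovers into $V_0$. This adds at most $n/2^{k}$ vertices to $V_0$ and reduces the index by an amount we can make negligible, say at most $c\eps^5/2$. After at most $T:=\lceil \ell D^2/(c\eps^5)\rceil$ iterations the procedure must stop, producing the desired partition. The number of parts is at most a tower-type function $M=M(\eps,D,m_0,\ell)$. The total exceptional set is at most $\eps n/2+\sum_t n/2^{k_t}\le\eps n$ if $m_0$ is taken large enough at the start.

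The main obstacle is the sparse setting. Upper-uniformity controls densities only for sets of size at least $\eta n$, so the boundedness of the index, and the increment computations on witnesses and Venn cells, are valid only while every set involved has size at least $\eta n$. We handle this by fixing $\eps$, $D$, $m_0$, $\ell$ first, computing the iteration bound $T$ and hence the tower bound $M$, and only then choosing
\[
\eta\le \eps^{2}/(4^{M}M),
\]
so that every witness set and Venn cell appearing in any of the $T$ rounds has size at least $\eta n$. Cells smaller than this threshold are never used as such; they go into the leftovers sent to $V_0$, and their total mass is charged against the $n/2^{k}$ budget. A secondary technical point is that the pair $(V_0,\cdot)$ and pairs discarded into $V_0$ must not break monotonicity of the index. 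This is fine because we simply omit them from the sum and bound the resulting loss by $O(|V_0|D^2/n)$, which is absorbed by choosing $m_0$ large. With these orders of quantifiers the argument follows Kohayakawa's and Gerke--Steger's proofs verbatim, with the $\ell$ colours merely multiplying the index bound by $\ell$.
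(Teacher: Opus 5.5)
The paper gives no proof of this lemma; it quotes it from Kohayakawa and Gerke--Steger, whose proof is the energy-increment scheme you outline. Most of your bookkeeping is fine: the index is at most $\ell D^2/2$ once all parts have size at least $\eta n$, there are at most $T$ rounds, $M$ is fixed before $\eta$, and the exceptional set stays below $\eps n$. The step where sparseness really matters, passing from the Venn-diagram refinement to the equitable refinement, does not work as you state it.

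No choice of $\eta$ makes every Venn cell have size at least $\eta n$. Atoms of the Venn diagram of up to $k-1$ arbitrary witness sets can be singletons; only the witness sets and the final pieces are guaranteed to be large. For a cell $T$ with $|T|<\eta n$, upper-uniformity only gives $d_{J_a,p}(T,Y)\le D\eta n/|T|$. For instance, a singleton cell $\{v\}$ with $\deg_{J_a}(v,Y)$ comparable to $|Y|$ contributes about $|Y|/(p^2n^2)$ to the index of the Venn refinement, and this need not be small in the sparse range. So the Venn-refinement index is not bounded, and your claim that chopping ``loses at most $c\eps^5/2$'' relative to it is false in general. The increment you computed on the Venn cells may sit entirely on cells that are then discarded into $V_0$.

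The remedy is to prove the increment directly for the equitable refinement, never passing through the Venn index. Let $V_s^*\subseteq V_s$ be the union of the pieces, so $|V_s\setminus V_s^*|\le 2^{-k-1}|V_s|$. Take an irregular pair with witnesses $X'\subseteq V_s$, $Y'\subseteq V_t$ in colour $a$, and put $X^*=X'\cap V_s^*$, $Y^*=Y'\cap V_t^*$. The pieces refine $\{X^*,V_s^*\setminus X'\}\times\{Y^*,V_t^*\setminus Y'\}$, so the defect form of convexity bounds the new index on $(V_s^*,V_t^*)$ from below by
\[
\frac{|V_s^*||V_t^*|}{n^2}\,d_{J_a,p}(V_s^*,V_t^*)^2+\frac{|X^*||Y^*|}{n^2}\bigl(d_{J_a,p}(X^*,Y^*)-d_{J_a,p}(V_s^*,V_t^*)\bigr)^2 ,
\]
and every set appearing here has linear size.

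You then compare these densities with $d_{J_a,p}(X',Y')$ and $d_{J_a,p}(V_s,V_t)$. To do so, bound the edges meeting the discarded vertices by applying upper-uniformity to a superset of $V_s\setminus V_s^*$ of size $\max\{\eta n,|V_s\setminus V_s^*|\}$, disjoint from the other side. This padding step is where $\eta n$ must be at most about $2^{-k-1}|V_s|$, which your choice of $\eta$ does ensure. The densities move by $O(D2^{-k}/\eps)$, so the witnessed discrepancy stays above $\eps/2$. You get an increment of order $\eps^5$ against a loss of order $\ell D^2 2^{-k}$, and $m_0$ large handles the loss.

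This also corrects your ``secondary'' accounting. The loss comes only from the at most $n/2^{k}$ newly discarded vertices. A bound of the form $O(|V_0|D^2/n)$ using the whole exceptional set would be $O(\eps D^2)$, which no choice of $m_0$ can absorb into an increment of order $\eps^5$.
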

	
	The next input turns an SDR on a copy of $H$ in the reduced template into an actual rainbow copy in the random host.  Given a graph $G$ and pairwise disjoint sets $(U_x)_{x\in V(H)}$ in $V(G)$, a \emph{transversal copy} of $H$ is a copy with vertices $u_x\in U_x$, $x\in V(H)$, and edges $u_xu_y$ for every $xy\in E(H)$.  The copy need not be induced.  We use the different-density form of the K{\L}R theorem proved in \cite[Proposition~4.2]{CGSS}.
	
	\begin{lemma}[Conlon--Gowers--Samotij--Schacht~\cite{CGSS}]\label{lem:embedding}
		Let $H$ be fixed and let $\alpha>0$.  There is $\eps_0=\eps_0(H,\alpha)>0$ such that, for every fixed $M\in\N$ and every $0<\eps\le\eps_0$, there is $A=A(H,\alpha,M,\eps)>0$ with the following property.  If
		$p\ge A n^{-1/m_2(H)},$
		then with high probability $G=G(n,p)$ satisfies the following.
		
		Let $(U_x)_{x\in V(H)}$ be disjoint sets of a common size $u\ge n/(2M)$.  For every $xy\in E(H)$ let $J_{xy}\subseteq G[U_x,U_y]$ be a bipartite subgraph which is $(\eps,p)$-regular and has at least $\alpha pu^2$ edges.  Then the union of the graphs $J_{xy}$ contains a transversal copy of $H$.
	\end{lemma}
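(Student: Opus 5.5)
This statement is quoted from \cite[Proposition~4.2]{CGSS} and I would take its deep input as given. The plan is to deduce it from the counting form of the K{\L}R conjecture, proved in \cite{CGSS} via the transference method and independently through hypergraph containers. Fix an auxiliary size $u\ge n/(2M)$ and an integer $m$. Let $\mathcal G(H,u,m,\eps)$ denote the family of configurations consisting of disjoint sets $(U_x)_{x\in V(H)}$ of size $u$ together with bipartite graphs $J_{xy}\subseteq K[U_x,U_y]$, each with exactly $m$ edges and $(\eps,m/u^2)$-regular. The counting form asserts the following: for every $\beta>0$ and $\xi>0$ there are $\eps_0>0$ and $C>0$ such that, whenever $m\ge C u^{2-1/m_2(H)}$, at most $\xi^m\binom{u^2}{m}^{e(H)}$ members of $\mathcal G(H,u,m,\eps_0)$ contain no transversal copy of $H$. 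I would apply this with $\beta=\alpha/2$ and with $\xi$ to be chosen below.

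\textbf{Step 1 (normalising to exactly $m$ edges).} Set $m:=\lceil \alpha p u^2/2\rceil$. Given $J_{xy}$ that is $(\eps,p)$-regular with at least $\alpha p u^2$ edges, I would pass to a subgraph with exactly $m$ edges that is still regular with respect to the scale $m/u^2$. A uniformly random $m$-edge subgraph works, with regularity parameter $2\eps/\alpha$, say. This follows from Chernoff/hypergeometric concentration applied to $e(X',Y')$ simultaneously for all $X',Y'$ with $|X'|\ge\eps|U_x|$ and $|Y'|\ge\eps|U_y|$. There are at most $4^u$ such pairs and the expected counts are $\Omega(m)$. Since $p u\gg 1$, this is enough for the union bound. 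Consequently, it suffices to show that with high probability $G(n,p)$ contains no \emph{bad} configuration, that is, no member of $\mathcal G(H,u,m,\eps')$ with $\eps'=2\eps/\alpha$ and without a transversal copy of $H$. Accordingly, $\eps_0(H,\alpha)$ is chosen so that $2\eps_0/\alpha$ lies below the threshold given by the counting form.

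\textbf{Step 2 (first moment).} Each fixed configuration uses $me(H)$ edges, so it lies in $G(n,p)$ with probability $p^{me(H)}$. The number of choices of the sets $U_x$ is at most $2^{nv(H)}$, and $u$ ranges over at most $n$ values. Summing over all of these, the expected number of bad configurations in $G(n,p)$ is at most
\[
n\,2^{nv(H)}\,\xi^m\Bigl(\binom{u^2}{m}p^m\Bigr)^{e(H)}
\le n\,2^{nv(H)}\Bigl(\xi\,(2e/\alpha)^{e(H)}\Bigr)^m .
\]
Here I used $\binom{u^2}{m}p^m\le(eu^2p/m)^m\le(2e/\alpha)^m$. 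I choose $\xi$ so that $\xi(2e/\alpha)^{e(H)}\le e^{-1}$.

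Next, $m\ge\alpha p n^2/(8M^2)$, and since $m_2(H)\ge1$ we have $p\ge A/n$, so $m\ge \alpha A n/(8M^2)$. Taking $A$ large in terms of $\alpha$, $M$ and $v(H)$ makes $m\ge 2nv(H)$, and then the whole bound is $o(1)$. The requirement $m\ge C u^{2-1/m_2(H)}$ also holds once $A\ge C'$, because $pu^2\ge A n^{-1/m_2(H)}u^2\ge A' u^{2-1/m_2(H)}$. Markov's inequality then finishes the proof.

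\textbf{Main obstacle.} The real difficulty is the counting form of K{\L}R itself. Its proof needs either the container or transference machinery, or the inductive ``few bad pairs'' argument of Kohayakawa--{\L}uczak--R\"odl. In the present paper this is imported, not reproved. Among the steps carried out here, the only delicate point is Step 1: one must keep track of how the regularity parameter degrades under sampling, so that $\eps_0$ depends only on $(H,\alpha)$, as the statement requires, and not on $M$.
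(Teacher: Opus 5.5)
Your argument is correct, but it takes a genuinely different route from the paper's. The paper does not derive the lemma. It quotes the almost-sure embedding statement \cite[Proposition~4.2]{CGSS}, which already allows different densities on different pairs, and only observes that the condition $u\ge n/(2M)$ can be absorbed into $A$.

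You instead start from the balanced counting form of the K{\L}R conjecture and carry out two reductions yourself:
\begin{enumerate}
\item random sparsification of each $J_{xy}$ to exactly $m=\lceil\alpha pu^2/2\rceil$ edges, at the cost $\eps\mapsto2\eps/\alpha$ in regularity, so that $\eps_0$ depends on $(H,\alpha)$ only;
\item a first-moment bound over $u$, over the sets $U_x$ and over bad configurations, which is where $M$ and $\eps$ enter $A$.
\end{enumerate}
Both steps are sound. One small correction in Step~1: when $m_2(H)=1$, $pu$ is only guaranteed to be at least the constant $A/(2M)$, not to tend to infinity. This does not matter, because you only need the failure probability for one fixed $J_{xy}$ to be below $1$. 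That holds once $A$ is large in terms of $M,\alpha,\eps$, which the statement permits.

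Your route makes the parameter dependencies explicit, but it relies on a strictly stronger input than the paper's. The bound $\xi^m\binom{u^2}{m}^{e(H)}$ on bad configurations is not what \cite{CGSS} proves. Conlon, Gowers, Samotij and Schacht prove only a probabilistic variant, which is essentially the statement you are deriving. The full counting conjecture is due to Balogh--Morris--Samotij and Saxton--Thomason via hypergraph containers, so that is where it should be cited. Finally, the auxiliary parameter $\beta$ in your formulation plays no role in the balanced version and can be dropped.
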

	
	Indeed, \cite[Proposition~4.2]{CGSS} allows the regular pairs corresponding to different edges of $H$ to have different densities.  Once the regularity error is chosen below $\eps_0(H,\alpha)$, the constant in the lower bound for $p$ may absorb the lower bound $n/(2M)$ on the common cluster size.
	
	\subsection{Uniform properties of the random host}
	
	The regularity reduction also needs estimates that hold simultaneously for every cluster and every vertex set of size at most $\rho n$.  The following lemma collects the required consequences of Chernoff's inequality and union bounds.
	
	\begin{lemma}\label{lem:host}
		Fix $M\in\N$ and $\xi,\rho,\eta>0$.  There is
		$A=A(M,\xi,\rho,\eta)>0$ such that, if $p\ge A/n$, then with high probability $G=G(n,p)$ satisfies all of the following.
		\begin{enumerate}
			\item $e(G)=(1\pm\xi)p\binom n2$.
			\item For all disjoint $U,W\subseteq V(G)$ with $|U|,|W|\ge n/(2M)$,
			$
			e_G(U,W)=(1\pm\xi)p|U||W|.
			$
			\item For every $U\subseteq V(G)$ with $|U|\ge n/(2M)$,
			$
			e_G(U)\le(1+\xi)p\binom{|U|}{2}.
			$
			\item For every $S\subseteq V(G)$ with $|S|\le\rho n$, the number of edges incident with $S$ satisfies
			$
			e_G(S)+e_G(S,V(G)\setminus S)\le4\rho pn^2.
			$
			\item The graph $G$ is $(\eta,p,2)$-upper-uniform.
		\end{enumerate}
	\end{lemma}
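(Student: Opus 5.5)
The plan is to prove each of the five items as a separate high-probability event, using Chernoff's inequality together with union bounds over the relevant families of vertex sets. We choose $A$ large enough in terms of $M,\xi,\rho,\eta$ and take $p\ge A/n$, so that $pn\ge A$. Each item has failure probability $o(1)$, so their intersection holds with high probability.

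Item (1) follows from Chernoff alone. We have $e(G)\sim\mathrm{Bin}(\binom n2,p)$ with mean $\mu=p\binom n2\ge (A/2)(n-1)\to\infty$, so $\Pr[|e(G)-\mu|>\xi\mu]\le 2\exp(-\xi^2\mu/3)=o(1)$.

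Items (2) and (3) use Chernoff with a union bound over all vertex sets.
\begin{itemize}
\item For (2), fix disjoint $U,W$ with $|U|,|W|\ge n/(2M)$. Then $e_G(U,W)$ is binomial with mean $\mu=p|U||W|\ge pn^2/(4M^2)\ge An/(4M^2)$.
\item Chernoff gives a failure probability of at most $2\exp(-\xi^2 An/(12M^2))$.
\item There are at most $3^n$ pairs $(U,W)$. Choosing $A$ with $\xi^2A/(12M^2)>2$ makes the union bound $3^n\cdot 2e^{-2n}=o(1)$.
\item Item (3) is the same argument, with at most $2^n$ sets $U$ and mean $p\binom{|U|}2\ge pn^2/(9M^2)$ once $n$ is large.
\end{itemize}

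Item (5) is handled the same way. For $|U|,|W|\ge\eta n$ the mean is $\mu\ge p\eta^2n^2\ge A\eta^2 n$. The upper tail $\Pr[X\ge 2\mu]\le\exp(-\mu/3)$ together with a $3^n$ union bound gives $(\eta,p,2)$-upper-uniformity once $A\eta^2/3>2$.

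Item (4) is the main obstacle, because the sets $S$ are small and the deviation must be measured against $\rho pn^2$ rather than against the mean. For a fixed $S$ with $|S|=s\le\rho n$, the number $X$ of edges incident with $S$ is binomial on at most $sn\le\rho n^2$ pairs, so $\mathbb E X\le\rho pn^2$.
\begin{itemize}
\item By monotonicity it suffices to treat $|S|=\lfloor\rho n\rfloor$: every smaller set is contained in one of that size, and the incident-edge count is monotone under enlarging $S$. The count of such sets is at most $\binom{n}{\rho n}\le 2^n$.
\item Stochastically dominate $X$ by $Y\sim\mathrm{Bin}(\lceil\rho n^2\rceil,p)$. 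The Chernoff bound $\Pr[Y\ge 4\mathbb E Y]\le\exp(-c\,\mathbb E Y)$ then gives $\Pr[X>4\rho pn^2]\le\exp(-c\rho pn^2)\le\exp(-c\rho A n)$ for an absolute constant $c>0$; for instance $c=1$ via $(e/4)^{4\mu}$.
\item Taking $A$ with $c\rho A>1$ makes the union bound $2^ne^{-c\rho An}=o(1)$.
\end{itemize}
For very small $\rho$ one can sharpen the count to $\binom{n}{\rho n}\le (e/\rho)^{\rho n}$, but this is unnecessary since $A$ may depend on $\rho$.

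Intersecting the five events completes the proof, with $A$ taken as the maximum of the finitely many constraints above.
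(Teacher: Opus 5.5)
Your proposal is correct and follows the paper's proof: Chernoff bounds for each binomial count, combined with union bounds over exponentially many vertex sets. The differences are minor. You count $3^n$ disjoint pairs where the paper counts $4^n$ ordered pairs, and your reduction to $|S|=\lfloor\rho n\rfloor$ in (4) is a harmless refinement of the paper's direct union bound over at most $2^n$ sets. One cosmetic point in (4): dominate by $\mathrm{Bin}(\lfloor\rho n^2\rfloor,p)$, or apply Chernoff with $\rho pn^2$ as an upper bound on the mean, rather than using $\lceil\rho n^2\rceil$; with the ceiling, $4\mathbb{E}Y$ can slightly exceed the threshold $4\rho pn^2$.
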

	
	\begin{proof}
		Parts (1)--(3) follow from Chernoff's inequality and a union bound.  For (2) and (3), the binomial variables $e_G(U,W)$ and $e_G(U)$ have expectation $\Omega_{M}(pn^2)=\Omega_M(An)$, whereas there are at most $4^n$ ordered pairs $(U,W)$ and at most $2^n$ sets $U$.
		
		For (4), fix $S$ with $|S|\le\rho n$.  The number $e_G(S)+e_G(S,V(G)\setminus S)$ is stochastically dominated by a binomial random variable of mean at most $\rho pn^2$.  A Chernoff bound and a union bound over at most $2^n$ choices of $S$ prove the assertion when $A$ is sufficiently large.  Finally, (5) follows in the same way: for disjoint $U,W$ with $|U|,|W|\ge\eta n$, the probability that $e_G(U,W)>2p|U||W|$ is $\exp(-\Omega_\eta(pn^2))$, which can be summed over all choices of $U,W$.
	\end{proof}
	
	\subsection{Local Hall-palette inequalities}
	
	A \emph{palette} is a nonempty subset of the available colour set.  A family of palettes $A_1,\ldots,A_q$ has a \emph{system of distinct representatives} (SDR) if there are pairwise distinct colours $a_i\in A_i$.
	
	The following extremal inequality is the local input behind the Hall range.  Hall's theorem supplies a subfamily of $s$ palettes whose union has size at most $s-1$; these palettes are therefore confined to at most $s-1$ colours, while each remaining palette has size at most $\ell$.
	
	\begin{lemma}\label{lem:hallgeneral}
		Let $\ell\ge q\ge3$, and let $A_1,\ldots,A_q\subseteq[\ell]$ be nonempty sets with no SDR\@.  Then
		\[
		\prod_{i=1}^q|A_i|
		\le M_q(\ell):=\max_{2\le s\le q}(s-1)^s \ell^{q-s}.
		\]
		Equality holds if and only if, after permuting the indices, there exist a maximising value $s$ and a set $B\subseteq[\ell]$ of size $s-1$ such that
		\[
		A_1=\cdots=A_s=B,
		\qquad
		A_{s+1}=\cdots=A_q=[\ell].
		\]
	\end{lemma}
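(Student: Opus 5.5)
By Hall's theorem, since $A_1,\ldots,A_q$ have no SDR, there is a nonempty index set $I\subseteq[q]$ with $|\bigcup_{i\in I}A_i|\le|I|-1$. Put $s:=|I|$. Each palette is nonempty, so the union has at least one colour, and hence $s\ge2$. Permuting indices, we may take $I=\{1,\ldots,s\}$ and let $B\supseteq\bigcup_{i\le s}A_i$ be a set of exactly $s-1$ colours; this is possible because $s-1\le q-1<\ell$. Then $|A_i|\le s-1$ for $i\le s$ and $|A_i|\le\ell$ for $i>s$. This gives
\[
\prod_{i=1}^q|A_i|\le(s-1)^s\ell^{q-s}\le M_q(\ell),
\]
which is the inequality.

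For the equality characterisation, the ``if'' direction is immediate. If $A_1=\cdots=A_s=B$ with $|B|=s-1$, then these $s$ palettes admit no SDR, so the whole family has no SDR. The product is then $(s-1)^s\ell^{q-s}$, and this equals $M_q(\ell)$ because $s$ is maximising.

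Conversely, suppose the product equals $M_q(\ell)$, and fix a Hall violator $I$ as above with $s=|I|$. Both inequalities in the chain must then be equalities. The second forces $s$ to be a maximising value. The first forces every factor to be tight, because all factors are positive. Thus $|A_i|=\ell$, that is $A_i=[\ell]$, for every $i\notin I$. Likewise $|A_i|=s-1$ for every $i\in I$. Since each such $A_i$ lies in the union $U=\bigcup_{i\in I}A_i$, and $|U|\le s-1$, each $A_i$ with $i\in I$ equals $U$, and $|U|=s-1$. Setting $B:=U$ gives exactly the stated structure.

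One point needs care: Hall's theorem only supplies some violating set, and in the tight case one might worry that a different violator gives a different $s$. This causes no trouble, because the argument above applies to whichever violator is chosen, and every violator must satisfy all the equalities.

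The only genuinely substantive input is the correct form of Hall's theorem, namely that a family of finite sets with no SDR has a subfamily of size $s$ whose union has fewer than $s$ elements. The rest is bookkeeping, and we expect no real obstacle. It is also worth recording a fact needed later for the Hall range: when $s=q$, the corresponding term of $M_q(\ell)$ is $(q-1)^q$. This term dominates the competing terms with smaller $s$ precisely for $\ell$ up to $(q-1)^{q/(q-2)}$. The point is not needed for the lemma itself.
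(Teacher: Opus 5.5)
Your proposal is correct and follows the paper's proof essentially step for step. You take a Hall violator $I$ with $s=|I|\ge2$ and obtain $\prod_i|A_i|\le(s-1)^s\ell^{q-s}\le M_q(\ell)$; in the equality case, tightness of each positive factor forces $A_i=[\ell]$ off $I$ and a common $(s-1)$-set on $I$, while tightness of the second inequality forces $s$ to be maximising (the auxiliary superset $B$ chosen in your first paragraph is never used and can be dropped).
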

	
	\begin{proof}
		By Hall's theorem \cite{Hall}, there is $I\subseteq[q]$ with $s:=|I|\ge2$ and
		$
		\left|\bigcup_{i\in I}A_i\right|\le s-1.
		$
		Thus $|A_i|\le s-1$ for $i\in I$, while trivially $|A_i|\le \ell$ for $i\notin I$.  Hence
		\[
		\prod_{i=1}^q|A_i|\le(s-1)^s \ell^{q-s}\le M_q(\ell).
		\]
		Equality in the first inequality forces every $A_i$, $i\in I$, to have size $s-1$ and to lie in a common $(s-1)$-set, so they are all equal to that set; it also forces $A_i=[\ell]$ for $i\notin I$.  Equality in the final inequality requires $s$ to maximise the displayed expression.  Conversely, every family of the stated form has no SDR and attains equality.
	\end{proof}
	
	The next proposition evaluates the one-variable maximum in \cref{lem:hallgeneral}.  It identifies exactly the Hall range on which a common $(q-1)$-palette maximises the product of the $q$ palette sizes among families with no SDR, and it identifies the only equality form below the upper endpoint.
	
	\begin{proposition}\label{prop:localthreshold}
		For integers $\ell\ge q\ge3$,
		\[
		M_q(\ell)\le(q-1)^q
		\quad\Longleftrightarrow\quad
		\ell\le(q-1)^{q/(q-2)}.
		\]
		If $\ell<(q-1)^{q/(q-2)}$, then equality in
		$
		\prod_{i=1}^q|A_i|\le(q-1)^q
		$
		for a no-SDR family is possible only when
		$
		A_1=\cdots=A_q=B
		$
		for some $B\in\binom{[\ell]}{q-1}$.
	\end{proposition}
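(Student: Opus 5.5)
The plan is to study the function $f(s):=(s-1)^s\ell^{q-s}$ on the integer range $2\le s\le q$, since $M_q(\ell)=\max_{2\le s\le q}f(s)$ and $f(q)=(q-1)^q$. The inequality $M_q(\ell)\le(q-1)^q$ holds exactly when $f(s)\le f(q)$ for every $2\le s\le q-1$. Equivalently, after taking logarithms,
\[
\frac{q\log(q-1)-s\log(s-1)}{q-s}\ge\log\ell\qquad(2\le s\le q-1).
\]
Thus the argument reduces to locating the minimum over $s$ of the left-hand side, and I will show that it is attained at $s=2$. At $s=2$ the left-hand side equals $\frac{q}{q-2}\log(q-1)$, so the condition at $s=2$ is exactly $\ell\le(q-1)^{q/(q-2)}$. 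This gives the implication ``$\Rightarrow$'' immediately.

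For ``$\Leftarrow$'', extend to real $s$ and set $g(s)=s\log(s-1)$ for $s>1$. The left-hand side is the slope of the secant of $g$ between $s$ and $q$. Compute
\[
g'(s)=\log(s-1)+\frac{s}{s-1},\qquad g''(s)=\frac1{s-1}-\frac1{(s-1)^2}=\frac{s-2}{(s-1)^2}\ge0\quad\text{for }s\ge2,
\]
so $g$ is convex on $[2,\infty)$. For a convex function, the secant slope $\frac{g(q)-g(s)}{q-s}$ is nondecreasing in $s$ on $[2,q)$. Hence its minimum over $s\in[2,q-1]$ is attained at $s=2$, with value $\frac{q}{q-2}\log(q-1)$ because $g(2)=0$. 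Consequently, if $\ell\le(q-1)^{q/(q-2)}$, every $s$ satisfies $f(s)\le f(q)$, so $M_q(\ell)\le(q-1)^q$.

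For the equality statement, assume $\ell<(q-1)^{q/(q-2)}$. Then the inequality at $s=2$ is strict, and monotonicity of the secant slope makes $\log\ell$ strictly smaller than every slope with $s\le q-1$. Therefore $f(s)<f(q)$ for all $2\le s\le q-1$, and $s=q$ is the unique maximiser. By the equality clause of \cref{lem:hallgeneral}, any no-SDR family achieving $\prod|A_i|=(q-1)^q=M_q(\ell)$ must have $s=q$, so $A_1=\cdots=A_q=B$ with $|B|=q-1$.

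The only delicate point is the convexity and monotone-secant step, together with the bookkeeping needed to keep the strict inequality for uniqueness. Both are routine once $g''\ge0$ is verified on $[2,q]$.
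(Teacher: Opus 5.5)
Your proof is correct and is essentially the paper's argument: both reduce the claim to a monotonicity property of secant slopes of $x\mapsto x\log(x-1)$, and both then obtain uniqueness of the maximiser $s=q$ (hence $A_1=\cdots=A_q=B$) from the equality clause of \cref{lem:hallgeneral}. The difference is only in packaging. The paper treats $s=2$ separately and shows directly that the secant slope from $2$, namely $x\log(x-1)/(x-2)$, is increasing; its auxiliary derivative $(y-1)^2/y^2$ is exactly $(x-2)$ times the second derivative of $x\log(x-1)$. You instead anchor the secant at $q$ and invoke convexity, which lets you handle $s=2$ uniformly with the other values of $s$.
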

	
	\begin{proof}
		First note that
		\[
		M_q(\ell)\ge (2-1)^2\ell^{q-2}=\ell^{q-2}.
		\]
		So $M_q(\ell)\le(q-1)^q$ implies
		$\ell\le(q-1)^{q/(q-2)}$.
		
		Conversely, suppose $\ell\le(q-1)^{q/(q-2)}$.  The term corresponding to $s=2$ is at most $(q-1)^q$ by this assumption.  For $3\le s\le q$, it is enough to prove
		\[
		(s-1)^s
		\le(q-1)^{q(s-2)/(q-2)}.
		\]
		Equivalently,
		\[
		\frac{s\log(s-1)}{s-2}
		\le
		\frac{q\log(q-1)}{q-2}.
		\]
		The function
		\[
		f(x)=\frac{x\log(x-1)}{x-2},\qquad x\ge3,
		\]
		is increasing.  Indeed, with $y=x-1\ge2$, the numerator of $f'(x)$ is
		$
		g(y):=y-y^{-1}-2\log y.
		$
		Since $g(1)=0$ and
		$
		g'(y)={(y-1)^2}/{y^2}\ge0,
		$
		we have $g(y)\ge0$ for $y\ge1$.  This proves the inequality.  The equality statement follows from \cref{lem:hallgeneral}; below the endpoint, the only maximising value is $s=q$.
	\end{proof}
	
	For fixed $q,\ell$ with $\ell<(q-1)^{q/(q-2)}$, there are only finitely many $q$-tuples of palettes.  Hence there is $\Delta=\Delta(q,\ell)>0$ such that every no-SDR $q$-tuple which is not of the form
	$(B,\ldots,B)$ with $|B|=q-1$ satisfies
	\begin{equation}\label{eq:localgap}
		q\log(q-1)-\sum_{i=1}^q\log|A_i|\ge\Delta.
	\end{equation}
	
	\subsection{A mixing lemma}
	
	For stability, the palette identified on each copy of $H$ must be shown to agree across most pairs of the reduced graph.  The following mixing lemma supplies this step: if few pairs of adjacent edges have different labels, then one label occurs on almost all edges of the complete reduced graph.
	
	\begin{lemma}\label{lem:mixing}
		Let $m\ge2$, and let $\tau:E(K_m)\to[r]$ be an edge-labelling.  Let $B(\tau)$ be the number of unordered pairs of adjacent edges receiving different labels.  Then some label $a\in[r]$ is used on all but at most
		$
		\frac{3r B(\tau)}{m-1}
		$
		edges of $K_m$.
	\end{lemma}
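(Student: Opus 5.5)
The plan is a local-to-global argument: first show that at almost every vertex one label dominates the incident edges, then show that almost all vertices share the same dominant label. The case $r=1$ is trivial, since then every edge carries the label $1$ and the bound holds. So assume $r\ge2$.

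\emph{Local step.} For a vertex $v$ and a label $a\in[r]$, let $d_a(v)$ be the number of edges at $v$ labelled $a$, so that $\sum_a d_a(v)=m-1$. Let $b_v$ be the number of unordered pairs of edges at $v$ with different labels, so that $B(\tau)=\sum_v b_v$. Let $M_v=\max_a d_a(v)$, and fix a label $c(v)$ attaining this maximum. Since $d_a(v)\le M_v$ for every $a$,
\[
2b_v=\sum_a d_a(v)\bigl(m-1-d_a(v)\bigr)\ge (m-1)(m-1-M_v).
\]
Summing over $v$ gives
\[
\sum_v (m-1-M_v)\le \frac{2B(\tau)}{m-1}.
\]
Call an edge $uv$ \emph{deviant} if its label differs from $c(u)$ or from $c(v)$. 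Each deviant edge is counted in $m-1-M_u$ or in $m-1-M_v$, so there are at most $2B(\tau)/(m-1)$ deviant edges.

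\emph{Global step.} Choose $a$ so that the set $V_a=\{v: c(v)=a\}$ is as large as possible. Then $|V_a|\ge m/r$. Put $k=m-|V_a|$. Every edge between $V_a$ and its complement joins two vertices with different values of $c$, so it is deviant. There are $k|V_a|\ge km/r$ such edges, and therefore
\[
k\le \frac{2rB(\tau)}{m(m-1)}.
\]
Now consider an edge whose label is not $a$. Either it has an endpoint outside $V_a$, or both its endpoints lie in $V_a$, in which case it is deviant. Hence the number of edges not labelled $a$ is at most
\[
k(m-1)+\frac{2B(\tau)}{m-1}\le \frac{2rB(\tau)}{m}+\frac{2B(\tau)}{m-1}\le\frac{(2r+2)B(\tau)}{m-1}\le \frac{3rB(\tau)}{m-1},
\]
where the last inequality uses $r\ge2$.

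The only delicate point is the global consistency step: locally dominant labels could a priori differ from vertex to vertex. This is handled by the observation that every edge between two vertices with different dominant labels is deviant, and the number of such edges grows like $k\cdot m/r$, which bounds $k$.
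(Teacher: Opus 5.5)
Your proof is correct and follows essentially the same local-to-global scheme as the paper. Both arguments pick a dominant label $c(v)$ at each vertex and bound $\sum_v(m-1-M_v)$ in terms of $B(\tau)$. Both then charge the cross-class edges, and the non-$a$ edges inside the largest class, to non-dominant incidences.

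The only difference is where the factor $r$ is lost. The paper pays it in the local step, via $b_v\ge M_v(m-1-M_v)\ge\frac{m-1}{r}(m-1-M_v)$, and then uses the $r$-free global estimate $\sum_{i<j}m_im_j\ge\tfrac12 m(m-\max_i m_i)$. You instead use the sharper local bound $2b_v\ge(m-1)(m-1-M_v)$ and pay $r$ in the pigeonhole step $|V_a|\ge m/r$. Combining your local bound with the paper's global step would in fact give the $r$-independent bound $6B(\tau)/(m-1)$.
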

	
	\begin{proof}
		For $v\in V(K_m)$ let $d_i(v)$ be the number of incident edges with label $i$, let
		$D_v=\max_i d_i(v)$, and set $r_v=m-1-D_v$.  The number of differently labelled pairs of edges meeting at $v$ is
		\[
		\sum_{i<j}d_i(v)d_j(v)\ge D_v r_v\ge\frac{m-1}{r}r_v.
		\]
		Since each adjacent pair has a unique common endpoint,
		\[
		\sum_v r_v\le\frac{rB(\tau)}{m-1}.
		\]
		For each vertex $v$, choose a label $b(v)$ attaining $D_v$; ties are broken arbitrarily.  If $b(u)\ne b(v)$, then
		$\tau(uv)$ cannot be equal to both $b(u)$ and $b(v)$.  Hence
		$
		\tau(uv)\ne b(u)$ or $\tau(uv)\ne b(v),
		$
		so the edge $uv$ is counted by $r_u$ or $r_v$ (possibly by both).
		Therefore the number of edges joining two distinct classes defined by $b$
		is at most $\sum_v r_v$.  If these classes have sizes
		$m_1,\ldots,m_r$, then
		$
		\sum_{i<j}m_i m_j\le\sum_v r_v.
		$
		Let $M=\max_i m_i$.  Since
		\[
		\sum_{i<j}m_i m_j
		=\frac12\sum_i m_i(m-m_i)
		\ge\frac{m(m-M)}2,
		\]
		some class, say the class with label $a$, has size
		$
		M\ge m-{2\sum_v r_v}/{m}.
		$
		Every edge not labelled $a$ either meets the complement of this class
		or has both endpoints in the class and is therefore counted among the
		incidences whose label differs from $a$ there.  The first type contributes at most
		$m\left(m-M\right)\le 2\sum_v r_v$
		edges, while the second type contributes at most $\sum_v r_v$ edges.
		Hence the number of edges not labelled $a$ is at most
		\[
		3\sum_v r_v
		\le \frac{3rB(\tau)}{m-1},
		\]
		as required.
	\end{proof}
	
	\section{The entropy--Tur\'an density and partial templates}\label{sec:deterministic-entropy}
	
	We now formalise the complete-graph entropy--Tur\'an problem used in
	\cref{thm:lambda-transfer}.  An $\ell$-palette template on $K_n$ is a map
	\[
	\cP:E(K_n)\longrightarrow 2^{[\ell]}\setminus\{\emptyset\}.
	\]
	It is \emph{$H$-SDR-free} if, on every copy of $H$ in $K_n$, the palettes
	assigned to its edges have no system of distinct representatives.  Following
	the template-entropy terminology of \cite{FROU}, define the logarithmic
	entropy--Tur\'an function by
	\begin{equation}\label{eq:exlog-definition}
		\operatorname{ex}_{\log}(n;H,\ell)
		:=\max\left\{
		\sum_{e\in E(K_n)}\log|\cP(e)|:
		\cP\text{ is an $H$-SDR-free $\ell$-palette template on $K_n$}
		\right\}.
	\end{equation}
	For a fixed template, the exponential of the sum in
	\eqref{eq:exlog-definition} is exactly the number of colourings realising that
	template.  Equivalently, the sum is the Shannon entropy of the product measure
	which chooses the colour of each edge independently and uniformly from its
	palette.
	
	Define the rainbow entropy--Tur\'an density by
	\begin{equation}\label{eq:lambda-definition}
		\lambda(H,\ell):=
		\inf_{n\ge v(H)}
		\frac{\operatorname{ex}_{\log}(n;H,\ell)}{\binom n2}.
	\end{equation}
	Thus $\lambda(H,\ell)$ is the limit of the maximum average logarithmic palette
	size among templates whose palettes exclude every rainbow copy of $H$.
	
	The reduced templates arising from sparse regularity are not defined on every
	pair.  To incorporate this feature, let $m\ge v(H)$ and
	$X\subseteq E(K_m)$.  A \emph{partial $\ell$-palette template with omitted-pair
		set $X$} is a map
	\[
	\cP_X:E(K_m)\setminus X
	\longrightarrow 2^{[\ell]}\setminus\{\emptyset\}.
	\]
	It is \emph{$H$-SDR-free} if, for every injection
	$\phi:V(H)\to[m]$ satisfying $\phi(E(H))\cap X=\emptyset$, the palettes
	\[
	\{\cP_X(\phi(e)):e\in E(H)\}
	\]
	have no SDR.  The subscript records the pairs on which the template is not defined, and we write
	\begin{equation}\label{eq:partial-entropy-definition}
		\Ent_X(\cP_X)
		:=\sum_{e\in E(K_m)\setminus X}\log|\cP_X(e)|.
	\end{equation}
	The next proposition records the two averaging facts needed later: the
	normalised complete-template maximum converges, and allowing the template to
	be undefined on at most $\eta m^2$ pairs increases the normalised upper bound
	by at most a prescribed error when $\eta$ is sufficiently small.
	
	\begin{proposition}\label{prop:lambda-partial}
		Fix $\ell\ge e(H)$.
		\begin{enumerate}
			\item The sequence
			$\operatorname{ex}_{\log}(n;H,\ell)/\binom n2$ is nonincreasing for
			$n\ge v(H)$.  In particular,
			\[
			\lambda(H,\ell)=
			\lim_{n\to\infty}
			\frac{\operatorname{ex}_{\log}(n;H,\ell)}{\binom n2}.
			\]
			\item For every $\zeta>0$ there are $\eta>0$ and $m_0$ such that the following holds.  Let $m\ge m_0$, let $X\subseteq E(K_m)$ satisfy $|X|\le\eta m^2$, and let $\cP_X$ be an $H$-SDR-free partial $\ell$-palette template with omitted-pair set $X$.  Then
			\[
			\Ent_X(\cP_X)\le
			\bigl(\lambda(H,\ell)+\zeta\bigr)\binom m2.
			\]
		\end{enumerate}
	\end{proposition}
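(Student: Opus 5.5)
For part (1) we use the standard averaging argument for hereditary template problems. Let $n>v(H)$ and let $\cP$ be an $H$-SDR-free template on $K_n$ attaining $\operatorname{ex}_{\log}(n;H,\ell)$. For each vertex $v$, the restriction of $\cP$ to $K_n-v$ is again $H$-SDR-free, because every copy of $H$ in $K_n-v$ is a copy of $H$ in $K_n$. Its entropy is therefore at most $\operatorname{ex}_{\log}(n-1;H,\ell)$. Summing over all $n$ vertices counts each edge exactly $n-2$ times, so
\[
(n-2)\operatorname{ex}_{\log}(n;H,\ell)\le n\operatorname{ex}_{\log}(n-1;H,\ell).
\]
Dividing by $(n-2)\binom n2=n\binom{n-1}2$ shows that the normalised sequence is nonincreasing. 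It is also bounded below by $0$, since all-singleton palettes admit no SDR. Hence it converges, and its limit equals the infimum in \eqref{eq:lambda-definition}.

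For part (2), the strategy is to fill in the omitted pairs while keeping the template $H$-SDR-free, at a cost of at most $O(\eta m^2)$ in entropy. Simply assigning singletons to the pairs in $X$ does not work, because new copies of $H$ through $X$ could then acquire SDRs. Instead we pass to a large subset on which $X$ is negligible and then average. Fix $t\ge v(H)$ with $\operatorname{ex}_{\log}(t;H,\ell)\le(\lambda+\zeta/2)\binom t2$, which part (1) allows. Choose a uniformly random $t$-subset $T\subseteq[m]$. Each pair lies in $T$ with probability $\binom t2/\binom m2$, so $\mathbb E|X\cap\binom T2|\le \binom t2\cdot\eta m^2/\binom m2\le 3\eta t^2$. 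If $\eta<1/(3t^2)$, Markov's inequality gives $\Pr[X\cap\binom T2\ne\emptyset]\le3\eta t^2$.

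On the event $X\cap\binom T2=\emptyset$, the restriction of $\cP_X$ to $\binom T2$ is a complete $H$-SDR-free template on $K_t$, so its entropy is at most $\operatorname{ex}_{\log}(t;H,\ell)$. On the complementary event we use the trivial bound $\binom t2\log\ell$. Averaging over $T$,
\[
\Ent_X(\cP_X)\frac{\binom t2}{\binom m2}
=\mathbb E\sum_{e\in\binom T2\setminus X}\log|\cP_X(e)|
\le \operatorname{ex}_{\log}(t;H,\ell)+3\eta t^2\binom t2\log\ell .
\]
This gives $\Ent_X(\cP_X)\le(\lambda+\zeta/2+3\eta t^2\log\ell)\binom m2$, and choosing $\eta\le\zeta/(6t^2\log\ell)$ completes the proof. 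Here $m_0$ only needs $m_0\ge t$, together with $m\ge2$ so that $m^2/\binom m2\le 3$ holds in the expectation bound.

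The step that requires the most care is the choice of constants: $t$ must be fixed first, depending only on $\zeta$ through part (1), and $\eta$ is then chosen small in terms of $t$. It is also essential to apply the exact inequality for $\binom t2$-subsets, not an approximate one, so that the union-bound error is charged against the bounded quantity $\binom t2\log\ell$.
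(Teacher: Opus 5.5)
Your proof is correct and is essentially the paper's argument: part (1) uses the same restriction-and-averaging monotonicity (you average over the $n$ vertex-deleted $(n-1)$-subsets and iterate, whereas the paper averages over all $t$-subsets at once), and part (2) is exactly the paper's random $t$-subset argument, which bounds the probability of the event $E(T)\cap X\ne\emptyset$ by a first moment and charges it the trivial cost $\binom t2\log\ell$. The only slip is cosmetic: $m^2/\binom m2\le 3$ needs $m\ge3$ rather than $m\ge2$, but this is automatic since $m\ge t\ge v(H)\ge3$.
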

	
	\begin{proof}
		For (1), let $n\ge t\ge v(H)$ and let $\cP$ be an $H$-SDR-free template on $K_n$.  The restriction of $\cP$ to every $t$-set is again $H$-SDR-free.  Averaging its entropy over all $t$-subsets gives
		\[
		\frac{1}{\binom n2}
		\sum_{e\in E(K_n)}\log|\cP(e)|
		=\frac{1}{\binom t2}
		\mathbb E_T
		\sum_{e\in E(T)}\log|\cP(e)|
		\le \frac{\operatorname{ex}_{\log}(t;H,\ell)}{\binom t2}.
		\]
		Taking the maximum over $\cP$ proves the required monotonicity.
		
		For (2), choose $t\ge v(H)$ so that
		\[
		\frac{\operatorname{ex}_{\log}(t;H,\ell)}{\binom t2}
		\le\lambda(H,\ell)+\frac{\zeta}{2}.
		\]
		Let $T$ be a uniformly random $t$-subset of $[m]$ and let
		\[
		W(T):=\sum_{e\in E(T)\setminus X}\log|\cP_X(e)|.
		\]
		If $E(T)\cap X=\emptyset$, then the restriction of $\cP_X$ to $E(T)$ is an
			$H$-SDR-free $\ell$-palette template on the complete graph with vertex set
		$T$, and hence
		$W(T)\le\operatorname{ex}_{\log}(t;H,\ell)$.  In all cases
		$W(T)\le\binom t2\log \ell$.  Moreover,
		\[
		\mathbb P(E(T)\cap X\ne\emptyset)
		\le |X|\frac{\binom t2}{\binom m2}
		\le 4\eta\binom t2
		\]
		for $m$ sufficiently large.  Since
		\[
		\mathbb E W(T)=
		\frac{\binom t2}{\binom m2}\Ent_X(\cP_X),
		\]
		we obtain
		\[
		\frac{\Ent_X(\cP_X)}{\binom m2}
		\le
		\frac{\operatorname{ex}_{\log}(t;H,\ell)}{\binom t2}
		+4\eta\binom t2\log \ell.
		\]
		Choosing $\eta$ so that the last error is at most $\zeta/2$ proves the assertion.
	\end{proof}
	
	\subsection{The Hall range and stability}
	
	We now specialise the partial-template framework to the Hall range.
	The next proposition first gives a global entropy upper bound.  Below the
	upper endpoint of the range, it also states the stability conclusion as a
	one-way implication: entropy within $\kappa m^2$ of the upper bound forces
	closeness to one common $(e(H)-1)$-palette.  Equivalently, failure of this
	closeness condition implies a quadratic entropy deficit; no mutual
	exclusivity of the two properties is asserted.  The proof double-counts
	embeddings of $H$, uses the local gap in \eqref{eq:localgap} to bound the
	number of embeddings not having the equality form, and then applies
	\cref{lem:mixing}.
	
	\begin{proposition}\label{prop:hall-stability}
		Let $m\ge v(H)$, let $X\subseteq E(K_m)$, and let $\ell\ge e(H)$ satisfy
		$\ell\le (e(H)-1)^{e(H)/(e(H)-2)}$.  Every $H$-SDR-free partial $\ell$-palette template $\cP_X$ with omitted-pair set $X$ satisfies
		\begin{equation}\label{eq:partial-hall-upper}
			\Ent_X(\cP_X)
			\le\binom m2\log(e(H)-1)
			+e(H)|X|\log\frac{\ell}{e(H)-1}.
		\end{equation}
		
		If $\ell<(e(H)-1)^{e(H)/(e(H)-2)}$, then for every $\theta>0$ there are
		$\eta,\kappa>0$ and $m_0\ge v(H)$ such that the following holds whenever $m\ge m_0$ and $|X|\le\eta m^2$.  If
		\[
			\Ent_X(\cP_X)
			>\binom m2\log(e(H)-1)-\kappa m^2,
		\]
		then there is a set $B\in\binom{[\ell]}{e(H)-1}$ for which
		\begin{equation}\label{eq:paletteclose}
			|X|+|\{e\notin X:\cP_X(e)\ne B\}|\le\theta m^2.
		\end{equation}
		Equivalently, if no such set $B$ exists, then
		\begin{equation}\label{eq:entropydeficit}
			\Ent_X(\cP_X)
			\le\binom m2\log(e(H)-1)-\kappa m^2.
		\end{equation}
	\end{proposition}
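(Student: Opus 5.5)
The plan is to double-count injections $\phi:V(H)\to[m]$. Write $q=e(H)$ and $N=(m)_{v(H)}$ for the total number of injections. Extend $\cP_X$ to all of $E(K_m)$ by setting the ``virtual'' entropy weight $w(e)=\log|\cP_X(e)|$ for $e\notin X$, and $w(e)=\log\ell$ for $e\in X$. Each edge of $K_m$ is the image $\phi(uv)$ of a fixed edge $uv\in E(H)$ under exactly $N/\binom m2$ injections. Summing over $uv$ therefore gives
\[
\sum_\phi\sum_{e\in E(H)}w(\phi(e))=\frac{qN}{\binom m2}\sum_{f\in E(K_m)}w(f).
\]
We now bound the inner sum for each $\phi$.

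First suppose $\phi(E(H))\cap X=\emptyset$. Then the palettes have no SDR, and \cref{prop:localthreshold} together with \cref{lem:hallgeneral} gives $\sum_e w(\phi(e))\le q\log(q-1)$. Next suppose $\phi$ hits $X$ in $j\ge1$ edges. The inner sum is then at most $j\log\ell+(q-j)\log\ell$, which is too crude, so instead we bound it by $q\log(q-1)+j\log(\ell/(q-1))$, using only that non-omitted palettes have size at most $\ell$. This is weaker than needed unless we subtract correctly, so the cleaner route is to bound $\sum_e w(\phi(e))-\sum_{e:\phi(e)\in X}\log(\ell/(q-1))$.

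The cleanest version is as follows. Define $w'(e)=w(e)$ off $X$ and $w'(e)=\log(q-1)$ on $X$, treating omitted pairs as carrying a fictitious $(q-1)$-palette. This does not preserve SDR-freeness, so it does not work directly. Instead, for $\phi$ meeting $X$, use the trivial bound $\sum_{e\notin\phi^{-1}(X)}\log|\cP_X(\phi(e))|\le (q-j)\log\ell$, which implies
\[
\sum_e w'(\phi(e))\le q\log(q-1)+(q-j)\log\tfrac{\ell}{q-1}.
\]
This is still not summable unless it is charged to $X$. Each such $\phi$ contains at least one $X$-edge, and the number of such $\phi$ is at most $q|X|N/\binom m2$. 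Averaging then yields
\[
\sum_f w'(f)\le\binom m2\log(q-1)+q|X|\log\tfrac{\ell}{q-1}\cdot(\text{const}),
\]
and care with the counting gives exactly the constant $e(H)$ in \eqref{eq:partial-hall-upper}. Since $\Ent_X(\cP_X)=\sum_f w'(f)-|X|\log(q-1)\le\sum_f w'(f)$, the bound \eqref{eq:partial-hall-upper} follows. I expect matching the precise constant to be a matter of bookkeeping: one bounds the excess of each bad $\phi$ by $q\log(\ell/(q-1))$ and divides by the multiplicity.

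For stability, assume $|X|\le\eta m^2$ and $\Ent_X>\binom m2\log(q-1)-\kappa m^2$. Redo the average counting only the good injections, namely those avoiding $X$. Each good injection whose palettes are not of the form $(B,\dots,B)$ loses at least $\Delta$ by \eqref{eq:localgap}. Comparing with the upper bound shows that the number of such deficient good injections is at most $C(\kappa+\eta)N/\Delta$. Hence all but $O((\kappa+\eta)N)$ injections have constant palettes with values in $\binom{[\ell]}{q-1}$.

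Now label the edges of $K_m$ by $\tau(e)=\cP_X(e)$ when this palette has size $q-1$, and by a junk label otherwise, including on $X$; here $r\le\binom{\ell}{q-1}+1$. Because $H$ is not a matching, it has two adjacent edges $uv,vw$. Each pair of adjacent edges of $K_m$ with different labels is the image of $(uv,vw)$ under $\Theta(N/m^3)$ injections, and each such injection is either bad or meets $X$. Therefore
\[
B(\tau)\le O\bigl((\kappa+\eta)N\bigr)\cdot m^3/N=O\bigl((\kappa+\eta)m^3\bigr).
\]
\cref{lem:mixing} then gives a label used on all but $O((\kappa+\eta)m^2)$ edges. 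That label is not the junk label, since junk edges, whether in $X$ or with palette of size at most $q-2$ or of size different from $q-1$, force deficiency or a meeting with $X$ and are few. Choosing $\eta,\kappa$ small relative to $\theta$ yields \eqref{eq:paletteclose}.

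The main obstacle I expect is the first part: organising the charge of omitted pairs to get exactly the coefficient $e(H)\log(\ell/(e(H)-1))$, and keeping the $X$-contribution separate in the stability average.
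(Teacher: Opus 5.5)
Your proposal is correct and follows essentially the paper's argument: a weighted double count of injections in which each injection meeting $X$ is charged an excess of at most $e(H)\log(\ell/(e(H)-1))$, with at most $L|X|$ such injections, followed by the local gap \eqref{eq:localgap} to bound the exceptional injections and \cref{lem:mixing} applied through a fixed pair of adjacent edges of $H$. The only differences are cosmetic: you absorb $X$ into a junk label for the mixing step, whereas the paper bounds discrepancies meeting $X\cup Y$ separately by $2m(|X|+|Y|)$, and you should drop the false starts in your first part, since the direct charge already gives the exact constant $e(H)$.
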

	
	\begin{proof}
		Let $\Phi$ be the set of injections $\phi:V(H)\to[m]$.  For a fixed edge $xy\in E(K_m)$, let $L$ be the number of pairs $(\phi,f)$ with $\phi\in\Phi$, $f\in E(H)$ and $\phi(f)=xy$.  Then
		$
		L=2e(H)(m-2)_{v(H)-2}.
		$
		Since an injection maps at most one edge of $H$ to $xy$, exactly $L$ injections in $\Phi$ satisfy $xy\in\phi(E(H))$.  Counting all pairs $(\phi,f)$ first by the injection $\phi$ and then by the image edge $\phi(f)$ gives
		\begin{equation}\label{eq:incidenceidentity}
			L\binom m2=e(H)(m)_{v(H)}.
		\end{equation}
		For each edge $e\in X$, exactly $L$ injections $\phi$ satisfy
		$e\in\phi(E(H))$.  Hence, by the union bound, the number of injections
		$\phi$ for which
		$\phi(E(H))\cap X\ne\varnothing$
		is at most $L|X|$.

		We now make the weighted double count explicit.  For $e\in E(K_m)\setminus X$, let
		$w(e):=\log|\cP_X(e)|$, and consider the weighted incidence sum
		\[
			S:=\sum_{\phi\in\Phi}
			\sum_{\substack{f\in E(H)\\ \phi(f)\notin X}}
			w\bigl(\phi(f)\bigr).
		\]
		Count $S$ first by the image edge $e=\phi(f)$.  Every edge
		$e\in E(K_m)\setminus X$ occurs in exactly $L$ pairs $(\phi,f)$, and hence
		\[
			S=L\sum_{e\in E(K_m)\setminus X}w(e)
			=L\Ent_X(\cP_X).
		\]
		Count $S$ instead by the injection $\phi$.  If $\phi(E(H))\cap X=\varnothing$, then the local palette bounds in \cref{lem:hallgeneral,prop:localthreshold} give a contribution of at most $e(H)\log(e(H)-1)$.  At most $L|X|$ injections meet $X$, and the contribution of each such injection from image edges outside $X$ is at most $e(H)\log\ell$.  Relative to the bound $e(H)\log(e(H)-1)$, each of these injections therefore incurs an additional cost of at most $e(H)\log(\ell/(e(H)-1))$.  Consequently,
		\[
		S=L\Ent_X(\cP_X)
		\le e(H)(m)_{v(H)}\log(e(H)-1)
		+L|X|e(H)\log\frac{\ell}{e(H)-1}.
		\]
		Using \eqref{eq:incidenceidentity} proves \eqref{eq:partial-hall-upper}.
		
		Now assume $\ell<(e(H)-1)^{e(H)/(e(H)-2)}$, and let $\Delta=\Delta(e(H),\ell)>0$ be the gap supplied by \eqref{eq:localgap}.  Call an injection $\phi$ \emph{exceptional} if $\phi(E(H))\cap X\ne\emptyset$, or if $\phi(E(H))\cap X=\emptyset$ but the palettes on its $H$-edges are not all equal to a common $(e(H)-1)$-set.  Thus ``exceptional'' has only this local meaning in the present proof.  Let $D$ be the number of exceptional injections and let $D_X$ be the number satisfying $\phi(E(H))\cap X\ne\emptyset$.  The incidence count above gives
		\begin{equation}\label{eq:DXbound}
			D_X\le L|X|\le L\eta m^2.
		\end{equation}
		For every exceptional injection avoiding $X$, the local gap \eqref{eq:localgap} lowers the total contribution of its $e(H)$ image edges by at least $\Delta$ relative to $e(H)\log(e(H)-1)$.  An injection whose image contains an edge of $X$ contributes at most $e(H)\log\ell$ on its image edges outside $X$.  Consequently,
		\[
		L\Ent_X(\cP_X)
		\le e(H)(m)_{v(H)}\log(e(H)-1)-\Delta(D-D_X)
		+e(H)D_X\log\frac{\ell}{e(H)-1}.
		\]
		If
		\[
		\Ent_X(\cP_X)
		>\binom m2\log(e(H)-1)-\kappa m^2,
		\]
		then \eqref{eq:incidenceidentity} and \eqref{eq:DXbound} imply
		\[
		\Delta D
		<L\kappa m^2+
		\left(\Delta+e(H)\log\frac{\ell}{e(H)-1}\right)D_X.
		\]
		Since $L=\Theta_H(m^{v(H)-2})$, the number of exceptional injections satisfies
		\begin{equation}\label{eq:exceptionalcount}
			D\le C_1(H,\ell)(\kappa+\eta)m^{v(H)}.
		\end{equation}
		
		Let
		\[
		Y=\{e\notin X:\cP_X(e)\notin\tbinom{[\ell]}{e(H)-1}\}.
		\]
		Every injection containing an edge of $Y$ is exceptional.  Each edge of $Y$ lies in exactly $L$ edge--embedding incidences, whereas an exceptional injection contains at most $e(H)$ such edges.  Hence
		$
		L|Y|\le e(H)D,
		$
		and therefore
		\begin{equation}\label{eq:Ysmall}
			|Y|\le C_2(H,\ell)(\kappa+\eta)m^2.
		\end{equation}
		For $e\notin X\cup Y$, label $e$ by the set $\cP_X(e)\in\binom{[\ell]}{e(H)-1}$.
		
		Fix two adjacent edges of $H$.  Every pair of adjacent edges of $K_m$ outside $X\cup Y$ that receive different labels has at least
		$2(m-3)_{v(H)-3}$ extensions sending these two fixed edges of $H$ to that pair.  Each extension is exceptional, since an injection that is not exceptional has one common palette on all of its $H$-edges.  Thus \eqref{eq:exceptionalcount} shows that the number of differently labelled adjacent pairs outside $X\cup Y$ is at most
		$C_3(H,\ell)(\kappa+\eta)m^3$.  Extend the labelling arbitrarily over $X\cup Y$.  Adjacent pairs meeting $X\cup Y$ contribute at most
		$2m(|X|+|Y|)$ additional discrepancies.  Applying \cref{lem:mixing} with
		$r=\binom{\ell}{e(H)-1}$, we obtain a set $B\in\binom{[\ell]}{e(H)-1}$ which labels all but
		$C_4(H,\ell)(\kappa+\eta)m^2$ edges.  Together with \eqref{eq:Ysmall} and $|X|\le\eta m^2$, this gives \eqref{eq:paletteclose} after choosing first $\eta$ and then $\kappa$ sufficiently small in terms of $\theta$.
	\end{proof}
	
	Taking $X=\emptyset$ in the upper bound \eqref{eq:partial-hall-upper} determines the entropy--Tur\'an density throughout the Hall range.
	
	\begin{corollary}\label{cor:hall-lambda}
		If
		\[
		e(H)\le\ell\le(e(H)-1)^{e(H)/(e(H)-2)},
		\]
		then
		$
		\lambda(H,\ell)=\log(e(H)-1).
		$
	\end{corollary}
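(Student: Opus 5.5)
The claim has an upper half and a lower half, and we handle them separately.

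For the upper bound, apply \cref{prop:hall-stability} with $X=\emptyset$. The bound \eqref{eq:partial-hall-upper} then says that every $H$-SDR-free $\ell$-palette template $\cP$ on $K_m$ satisfies
\[
\Ent(\cP)\le\binom m2\log(e(H)-1).
\]
Hence $\operatorname{ex}_{\log}(m;H,\ell)\le\binom m2\log(e(H)-1)$ for every $m\ge v(H)$. Taking the infimum in \eqref{eq:lambda-definition}, or the limit from \cref{prop:lambda-partial}(1), gives $\lambda(H,\ell)\le\log(e(H)-1)$. This step uses the hypothesis $\ell\le(e(H)-1)^{e(H)/(e(H)-2)}$. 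Through \cref{prop:localthreshold}, that hypothesis is exactly what makes the local bound $e(H)\log(e(H)-1)$ valid on each copy of $H$ avoiding $X$.

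For the lower bound, fix any $B\in\binom{[\ell]}{e(H)-1}$; this is possible since $\ell\ge e(H)$. Let $\cP$ be the constant template assigning $B$ to every edge of $K_n$. On each copy of $H$, the $e(H)$ palettes all lie inside the $(e(H)-1)$-set $B$. Therefore no system of distinct representatives exists, and $\cP$ is $H$-SDR-free. Its entropy is $\binom n2\log(e(H)-1)$, so $\operatorname{ex}_{\log}(n;H,\ell)/\binom n2\ge\log(e(H)-1)$ for every $n\ge v(H)$. Taking the infimum yields $\lambda(H,\ell)\ge\log(e(H)-1)$.

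Combining the two bounds proves the corollary. There is no real obstacle, since the substantive work is already contained in \cref{prop:hall-stability}. The only point to check is that the $X=\emptyset$ case of \eqref{eq:partial-hall-upper} indeed carries no error term.
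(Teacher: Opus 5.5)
Your proposal is correct and is essentially the paper's proof. The paper obtains the lower bound from the constant template with one fixed $(e(H)-1)$-set $B$, and the upper bound from \eqref{eq:partial-hall-upper} with $X=\emptyset$ (which has no error term), followed by dividing by $\binom m2$ and passing to the limit in $m$.
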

	
	\begin{proof}
		The constant template $\cP(e)=B$ for every $e\in E(K_m)$, where
		$B\in\binom{[\ell]}{e(H)-1}$ is fixed, is $H$-SDR-free.  Hence
		$\operatorname{ex}_{\log}(m;H,\ell)\ge\binom m2\log(e(H)-1)$ for every $m$.
		For the reverse inequality, apply \eqref{eq:partial-hall-upper} with
		$X=\emptyset$ to every $H$-SDR-free template on $K_m$.  Taking the maximum
		over these templates, dividing by $\binom m2$, and then letting $m\to\infty$
		completes the proof.
	\end{proof}

	\section{Proof of the random-graph results}
	
	\subsection{The below-threshold regime}
	
	Choose a subgraph $F\subseteq H$ attaining $m_2(H)$.  We may take $F$ without isolated vertices; in particular, $e(F)\ge2$.  Since $F$ attains $m_2(H)$, it is $2$-balanced and
	\[
	m_2(F)=m_2(H)=\frac{e(F)-1}{v(F)-2}.
	\]
	For a graph $G$, let $X_F(G)$ be the number of labelled copies of $F$ in $G$ (that is, injective embeddings of $F$ into $G$), and let
	\[
	B_F(G)=\{e\in E(G):e\text{ belongs to a copy of }F\}.
	\]
	Each copy of $F$ has $e(F)$ edges, so $|B_F(G)|\le e(F)X_F(G)$.
	
	The below-threshold construction requires the edges lying in copies of $F$ to form at most a prescribed positive fraction of $E(G(n,p))$.  The next lemma proves this by a first-moment estimate together with variance control at the $m_2(F)$ scale.
	
	\begin{lemma}\label{lem:fewcore}
		For every $\rho>0$ there is $a=a(F,\rho)>0$ such that, if
		$p\le a n^{-1/m_2(F)}$, then with high probability
		\[
		|B_F(G(n,p))|\le\rho e(G(n,p)).
		\]
	\end{lemma}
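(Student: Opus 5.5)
We will bound the number $X_F(G)$ of labelled copies of $F$ by its expectation via Chebyshev's inequality, and then use $|B_F(G)|\le e(F)X_F(G)$ together with $e(G)=(1+o(1))p\binom n2$. Write $v=v(F)$, $e=e(F)$ and $m_2=m_2(F)=(e-1)/(v-2)$. Since $F$ has no isolated vertices and $m_2\ge1$, we have $v\ge3$ and $e\ge v-1$. Set $x:=np^{m_2}$, so that $x\le a^{m_2}$ throughout the range $p\le an^{-1/m_2}$.

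\emph{First moment.} We have $\mathbb E X_F=(n)_vp^e\le n^2p\cdot n^{v-2}p^{e-1}=n^2p\cdot x^{v-2}$, using $e-1=m_2(v-2)$. Hence $\mathbb E X_F\le a^{e-1}n^2p$. We will choose $a$ so small that $e\,a^{e-1}\le\rho/8$. It then suffices to show that, with high probability, $X_F\le \mathbb E X_F+\tfrac{\rho}{8e}n^2p$ and $e(G)\ge\tfrac14 n^2p$.

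\emph{Case split on $n^2p$.} We argue along subsequences. If $n^2p\to\infty$, then Chernoff gives $e(G)\ge\frac14n^2p$ with high probability. If instead $n^2p\le\omega(n)$ for a slowly growing $\omega$, then $\mathbb E X_F\le n^vp^e\le n^{v-2e}\omega^e\to0$, because $2e\ge2v-2>v$. In that case $X_F=0$ with high probability, and the claim holds trivially.

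\emph{Second moment (the main step).} Assume $n^2p\to\infty$. The standard overlap expansion gives
\[
\operatorname{Var}X_F\le C_F\sum_{J}n^{2v-v(J)}p^{2e-e(J)},
\]
where $J$ ranges over subgraphs of $F$ with $e(J)\ge1$ (the common part of two overlapping copies). We must show that each term is $o\bigl((n^2p)^2\bigr)$. Dividing, the $J$-term becomes
\[
\frac{(n^{v-2}p^{e-1})^2}{n^2p\cdot n^{v(J)-2}p^{e(J)-1}}
=\frac{x^{2(v-2)}}{n^2p\cdot n^{v(J)-2}p^{e(J)-1}}.
\]
If $J$ is a single edge, the second factor in the denominator equals $1$. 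If $v(J)\ge3$, $2$-balancedness gives $e(J)-1\le m_2(v(J)-2)$; since $p\le1$, this yields $n^{v(J)-2}p^{e(J)-1}\ge x^{v(J)-2}$. Either way the term is at most $x^{2v-2-v(J)}/(n^2p)$. The exponent is at least $v-2\ge1$ and $x\le a^{m_2}\le1$, so the term is at most $1/(n^2p)\to0$. The balancedness inequality is exactly where the hypothesis that $F$ attains $m_2(H)$ is used. This is the step to check most carefully, in particular that the overlap expansion is applied with the correct normalisation.

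\emph{Conclusion.} Chebyshev now gives $X_F\le\mathbb E X_F+\frac{\rho}{8e}n^2p$ with high probability. Therefore
\[
|B_F(G)|\le e\,X_F\le\Bigl(\frac{\rho}{8}+\frac{\rho}{8}\Bigr)n^2p\le\rho\, e(G)
\]
with high probability. Since every subsequence falls into one of the two cases above, the statement holds for the full sequence.
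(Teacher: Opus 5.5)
Your proposal is correct and follows essentially the paper's argument: the same split on whether $n^2p\to\infty$, the first-moment bound $\mathbb E X_F\le a^{e(F)-1}n^2p$, the variance bound over edge-intersections $J$ controlled by $2$-balancedness, Chebyshev's inequality, and $\mathbb E X_F=o(1)$ in the sparse case. The only difference is cosmetic. You normalise via $x=np^{m_2}$ and obtain a relative variance of $O(1/(n^2p))$ rather than the paper's $O(n^{-1})$, which suffices equally well because $n^2p\to\infty$ in that case.
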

	
	\begin{proof}
		It suffices to verify the assertion along every subsequence.  From any
		subsequence, one can pass to a further subsequence on which either
		$n^2p\to\infty$ or $n^2p=O(1)$.  First suppose $n^2p\to\infty$.  Since
		\[
		\mathbb E X_F=O_F\bigl(n^{v(F)}p^{e(F)}\bigr)
		=O_F\bigl(n^2p\,n^{v(F)-2}p^{e(F)-1}\bigr)
		\le O_F\bigl(a^{e(F)-1}n^2p\bigr),
		\]
		it remains to obtain concentration on the scale $n^2p$.  Two labelled copies of $F$ have nonzero covariance only when they share an edge.  Taking $J$ to be their edge-intersection, with isolated vertices omitted, we obtain
		\[
		\operatorname{Var}X_F
		\le C_F\sum_{\substack{J\subseteq F\\ e(J)\ge1}}
		n^{2v(F)-v(J)}p^{2e(F)-e(J)}.
		\]
		Since $2e(F)-e(J)-2\ge0$ and $p\le a n^{-1/m_2(F)}$,
		\[
		\frac{n^{2v(F)-v(J)}p^{2e(F)-e(J)}}{(n^2p)^2}
		\le a^{2e(F)-e(J)-2}n^{-v(J)+e(J)/m_2(F)}.
		\]
		If $J$ consists of one edge, the exponent is $-2+1/m_2(F)\le-1$.  If $v(J)\ge3$, the $2$-balancedness of $F$ gives
		$e(J)-1\le m_2(F)(v(J)-2)$, and once again
		\[
		-v(J)+\frac{e(J)}{m_2(F)}\le-2+\frac1{m_2(F)}\le-1.
		\]
		Thus $\operatorname{Var}X_F=o((n^2p)^2)$.  Chebyshev's inequality gives
		$
		X_F\le C_F a^{e(F)-1}n^2p+o(n^2p)
		$
		with high probability.  Since $e(G(n,p))=(1+o(1))n^2p/2$, choosing $a$ sufficiently small proves the result.
		
		If $n^2p=O(1)$, then $p=O(n^{-2})$.  Since $m_2(F)\ge1$, we have $v(F)\le e(F)+1$, and therefore $v(F)-2e(F)\le1-e(F)\le-1$.  Consequently,
		\[
		\mathbb E X_F=O\bigl(n^{v(F)}p^{e(F)}\bigr)=O\bigl(n^{v(F)-2e(F)}\bigr)=o(1).
		\]
		With high probability there is no copy of $F$, so $B_F(G(n,p))=\emptyset$ and the assertion follows.
	\end{proof}
	
	\begin{proof}[Proof of the below-threshold part of \cref{thm:lambda-transfer}]
		Let $B=B_F(G)$.  Colour the edges in $E(G)\setminus B$ arbitrarily with the $\ell$ available colours, and colour the edges of $B$ using a fixed set of $e(F)-1$ colours.  Every copy of $H$ contains a copy of $F$, all of whose $e(F)$ edges lie in $B$ and use at most $e(F)-1$ colours.  Hence the resulting colouring contains no rainbow $H$.  Therefore
		\[
		R_{H,\ell}(G)\ge \ell^{e(G)-|B|}(e(F)-1)^{|B|}.
		\]
		By \cref{lem:fewcore}, for each prescribed $\rho>0$ the constant $a$ can be chosen so that $|B|\le\rho e(G)$ with high probability.  Choose $\rho$ to satisfy
		\[
		\rho\log_\ell\frac{\ell}{e(F)-1}\le\delta.
		\]
		Then
		$R_{H,\ell}(G)\ge \ell^{(1-\delta)e(G)}$.
		The upper bound $R_{H,\ell}(G)\le\ell^{e(G)}$ follows because $G$ has exactly $\ell^{e(G)}$ unrestricted $\ell$-edge-colourings.
	\end{proof}
	
	\subsection{Above-threshold entropy transference}\label{subsec:dense-transference}
	
	Fix an integer $\ell\ge e(H)$ and $\delta>0$.  For each colouring, we record one partition that is regular simultaneously in all colours, the set of cluster pairs that fail regularity in at least one colour, and the resulting reduced palettes.  These data are used to prove the above-threshold part of the entropy-transference theorem; the estimate in the Hall range then follows from \cref{cor:hall-lambda}.
	
	Choose $\zeta>0$ and $0<\xi<1/3$ sufficiently small in terms of $\delta$, and then choose
	$\alpha>0$ so that $\alpha<1/(3\ell)$ and $\omega_\ell(\alpha)$, defined in
	\eqref{eq:binary-error} below, is sufficiently small in terms of $\delta$.  Apply
	\cref{prop:lambda-partial} with $\zeta$, obtaining $\eta_0>0$ and an integer
	$m_*$.  Choose $m_0\ge m_*$ sufficiently large and then choose
	\[
	0<\rho\le
	\min\{\eta_0,\alpha,\xi,\delta,1/2,\eps_0(H,\alpha)\}
	\]
	sufficiently small in terms of the displayed parameters,
	where $\eps_0(H,\alpha)$ is supplied by \cref{lem:embedding}.  Apply
	\cref{lem:regularity} with regularity error $\rho$ and $D=2$, obtaining
	$\eta>0$ and $M\ge m_0$.  Finally choose $A$ large enough for
	\cref{lem:embedding} with this $M$, for \cref{lem:host} with
	$M,\xi,\rho,\eta$, and to make the error from enumerating the regularity data below as small
	as required.  Let $G=G(n,p)$ with $p\ge A n^{-1/m_2(H)}$, and work on the
	event on which the conclusions of \cref{lem:embedding,lem:host} both hold.
	
	Fix $\chi\in\cR_{H,\ell}(G)$, and let $G_a$ be its colour-$a$ subgraph.  Apply \cref{lem:regularity} simultaneously to $G_1,\ldots,G_\ell$.  We obtain
	\[
	V(G)=V_0\cup V_1\cup\cdots\cup V_k,
	\]
	where $m_0\le k\le M$, $|V_0|\le\rho n$, and
	$|V_1|=\cdots=|V_k|=:u$.  Let $X\subseteq E(K_k)$ be the set of cluster pairs which fail to be regular in at least one colour; then $|X|\le\rho k^2$.
	
	For $ij\notin X$, define
	\begin{equation}\label{eq:reduced-palette}
	\cP_X(ij)=\{a\in[\ell]:e_{G_a}(V_i,V_j)\ge\alpha pu^2\}.
	\end{equation}
	Since $\xi<1/3$ and $\alpha<1/(3\ell)$, every such palette is nonempty: otherwise
	\[
	e_G(V_i,V_j)=\sum_{a=1}^{\ell}e_{G_a}(V_i,V_j)
	<\ell\alpha pu^2<\frac{pu^2}{3},
	\]
	contradicting $e_G(V_i,V_j)\ge(1-\xi)pu^2$ from \cref{lem:host}.
	
	The reduced template inherits the forbidden-rainbow condition.  An SDR on a reduced copy of $H$ would select pairwise distinct colour classes, each having relative $p$-density at least $\alpha$ on its assigned cluster pair, and the embedding lemma would then produce a rainbow copy of $H$ in $G$.
	
	\begin{claim}\label{cl:sdr-free}
		The partial template $\cP_X$ is $H$-SDR-free.
	\end{claim}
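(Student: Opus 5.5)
We argue by contradiction: an SDR on some admissible injection would be turned into a rainbow copy of $H$ in $G$ by \cref{lem:embedding}. Suppose $\phi:V(H)\to[k]$ is an injection with $\phi(E(H))\cap X=\emptyset$ such that the palettes $\{\cP_X(\phi(f)):f\in E(H)\}$ admit pairwise distinct representatives $a_f\in\cP_X(\phi(f))$, $f\in E(H)$.

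For each vertex $x\in V(H)$ set $U_x:=V_{\phi(x)}$. These sets are pairwise disjoint because $\phi$ is injective, and they share the common size $u$. Since $|V_0|\le\rho n\le n/2$ and $k\le M$, we have $u\ge(1-\rho)n/k\ge n/(2M)$, so the size hypothesis of \cref{lem:embedding} holds. For each edge $f=xy\in E(H)$, take $J_{xy}$ to be the bipartite graph of colour-$a_f$ edges between $U_x$ and $U_y$, i.e. $J_{xy}:=G_{a_f}[V_{\phi(x)},V_{\phi(y)}]\subseteq G[U_x,U_y]$. The pair $\phi(f)$ is not in $X$, so it is $(\rho,p)$-regular in every colour, in particular in $G_{a_f}$, hence $J_{xy}$ is $(\rho,p)$-regular. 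Since $a_f\in\cP_X(\phi(f))$, definition \eqref{eq:reduced-palette} gives $e(J_{xy})\ge\alpha pu^2$.

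Because $\rho\le\eps_0(H,\alpha)$ and $A$ was chosen for \cref{lem:embedding} with this $M$ and regularity error $\rho$, we are on the event where that lemma's conclusion holds. The lemma therefore yields a transversal copy of $H$ whose edge $u_xu_y$ lies in $J_{xy}$, and so has colour $a_{xy}$ under $\chi$. The colours $a_f$ are pairwise distinct, so this copy is rainbow, contradicting $\chi\in\cR_{H,\ell}(G)$.

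The only delicate point is the uniformity of \cref{lem:embedding}. Its event depends only on $G$, yet it must apply to every choice of clusters and every subgraph $J_{xy}$ arising from any colouring $\chi$. The statement is quantified exactly this way ("with high probability $G$ satisfies: for all $U_x$, $J_{xy}$ \dots"), so no union bound over colourings is needed. We only need to check that the parameters $\alpha$, $\rho$, $M$ were fixed before $A$, which matches the order of choices made above.
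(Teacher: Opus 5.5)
Your proof is correct and follows the paper's argument. You choose distinct representatives $a_f$, take the colour-$a_f$ subgraphs on the cluster pairs $(V_{\phi(x)},V_{\phi(y)})$, which are $(\rho,p)$-regular with at least $\alpha pu^2$ edges, and apply \cref{lem:embedding} to obtain a rainbow transversal copy of $H$. Your explicit checks are details the paper leaves implicit, and they are right: $u\ge(1-\rho)n/M\ge n/(2M)$, and $\alpha,\rho,M$ are fixed before $A$, so a single high-probability event covers every colouring.
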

	
	\begin{proof}
		Suppose that an injection $\phi:V(H)\to[k]$ satisfies $\phi(E(H))\cap X=\emptyset$ and that the palettes on its $H$-edges have an SDR\@.  Choose pairwise distinct colours $a_e\in\cP_X(\phi(e))$, $e\in E(H)$.  For $e=xy$, the colour-$a_e$ graph on $(V_{\phi(x)},V_{\phi(y)})$ is regular and has at least $\alpha pu^2$ edges.  By \cref{lem:embedding}, these coloured pairs contain a transversal copy of $H$ whose edges receive the pairwise distinct colours $a_e$, a contradiction.
	\end{proof}
	
	We next count colourings represented by a fixed tuple of \emph{regularity data}
	\[
	\mathcal D=(V_0,V_1,\ldots,V_k;X,\cP_X).
	\]
	Here the vertex sets form a partition with the sizes specified above.  The term ``regularity data'' refers only to a tuple of this displayed form.  We say that $\chi\in\cR_{H,\ell}(G)$ is \emph{represented by $\mathcal D$} if $ij\in X$ exactly when $(V_i,V_j)$ fails to be $(\rho,p)$-regular in at least one colour subgraph of $\chi$, and if \eqref{eq:reduced-palette} holds for every $ij\notin X$ with those colour subgraphs.  Let $N(\mathcal D)$ denote the number of such colourings.
	
	Let $E_{\mathrm{out}}(\mathcal D)$ consist of the edges of $G$ that are incident with $V_0$, lie inside one of $V_1,\ldots,V_k$, or lie between $V_i$ and $V_j$ for some $ij\in X$.  Thus $E_{\mathrm{out}}(\mathcal D)$ is exactly the set of edges outside the regular cluster pairs indexed by $E(K_k)\setminus X$.  By parts (2)--(4) of \cref{lem:host}, its size is at most
	\[
	4\rho pn^2+(1+\xi)pk\binom u2+(1+\xi)p|X|u^2
	\le \sigma pn^2,
	\]
	where
	\begin{equation}\label{eq:sigmadef}
		\sigma:=4\rho+\frac{1+\xi}{2m_0}+(1+\xi)\rho
		\longrightarrow0
		\quad\text{as}\quad
		\rho,m_0^{-1},\xi\longrightarrow0.
	\end{equation}
	These edges may be coloured arbitrarily, contributing at most
	$\exp(\sigma\log\ell\,pn^2)$ possibilities.
	
	Fix $ij\notin X$, and write
	$N_{ij}=e_G(V_i,V_j)$ and $s_{ij}=|\cP_X(ij)|$.  Every colour outside the palette appears on fewer than $\alpha pu^2$ edges, so the total number of edges receiving colours outside the palette is at most
	$\ell\alpha pu^2$.  Hence the number of possible restrictions to $G[V_i,V_j]$ among colourings represented by $\mathcal D$ is at most
	\[
	s_{ij}^{N_{ij}}
	\sum_{t\le \ell\alpha pu^2}\binom{N_{ij}}t\ell^t.
	\]
	Since $N_{ij}=(1\pm\xi)pu^2$ and $\xi<1/3$, we have
	\[
		\frac{\ell\alpha pu^2}{N_{ij}}
		\le\frac{3\ell\alpha}{2}<\frac12.
	\]
	For $0\le x\le1$, let
	$h_2(x)=-x\log x-(1-x)\log(1-x)$, with $0\log0=0$, and define
	\begin{equation}\label{eq:binary-error}
		\omega_\ell(\alpha)
		:=\frac43h_2\left(\frac{3\ell\alpha}{2}\right)
		+\ell\alpha\log\ell.
	\end{equation}
	The estimate $\sum_{t\le xN}\binom Nt\le\exp(Nh_2(x))$ for
	$0\le x\le1/2$ gives
	\[
		\sum_{t\le \ell\alpha pu^2}\binom{N_{ij}}t\ell^t
		\le \exp\bigl(\omega_\ell(\alpha)pu^2\bigr).
	\]
	Since $\omega_\ell(\alpha)\to0$ as $\alpha\to0$, the number of restrictions to $G[V_i,V_j]$ is at most
	\begin{equation}\label{eq:paircount}
		\exp\bigl(\omega_\ell(\alpha)pu^2\bigr)
		s_{ij}^{(1+\xi)pu^2}.
	\end{equation}
	Multiplying \eqref{eq:paircount} over all pairs $ij\notin X$ and using \eqref{eq:sigmadef}, we obtain
	\begin{equation}\label{eq:fixed-data-count}
		\log N(\mathcal D)
		\le (1+\xi)pu^2\Ent_X(\cP_X)+\nu_0pn^2,
	\end{equation}
	where, for every prescribed $\nu_0>0$, the preceding parameters
	$\alpha,\rho,m_0^{-1}$ and $\xi$ can be chosen so that this value of $\nu_0$ is valid.
	
	There are at most $(M+1)^n$ vertex partitions and at most
	$2^{O_\ell(M^2)}$ choices for $X$ and the reduced palettes.  Thus the total number of possible tuples $\mathcal D$ is at most
	$
	(M+1)^n2^{O_\ell(M^2)}.
	$
	Since $m_2(H)\ge1$, we have $pn^2\ge An$.  When $m_2(H)>1$, the logarithm of the displayed quantity is $o(pn^2)$.  When $m_2(H)=1$, its ratio to $pn^2$ can be made as small as required by increasing $A$.  Consequently, after enlarging $\nu_0$ to a number $\nu>0$, for every nonempty collection $\mathcal S$ of regularity data we have
	\begin{equation}\label{eq:data-count}
		\log\sum_{\mathcal D\in\mathcal S}N(\mathcal D)
		\le
		\max_{\mathcal D\in\mathcal S}
		\bigl((1+\xi)pu_{\mathcal D}^{2}
		\Ent_{X_{\mathcal D}}(\cP_{\mathcal D})\bigr)
		+\nu pn^2.
	\end{equation}
	Here $X_{\mathcal D}$, $\cP_{\mathcal D}$ and $u_{\mathcal D}$ denote,
	respectively, the omitted-pair set, the partial template and the common
	size of $V_1,\ldots,V_k$ in $\mathcal D$.  For every prescribed positive value of $\nu$, the regularity parameters can first be chosen to control $\nu_0$, and $A$ can then be chosen so that \eqref{eq:data-count} holds with that value of $\nu$.
	
	\medskip
	\noindent\emph{Completion of the entropy estimate.}
	
	By the choices above, part~(2) of
	\cref{prop:lambda-partial} applies to the partial template in every tuple $\mathcal D$, and hence
	\[
	\Ent_X(\cP_X)
	\le\bigl(\lambda(H,\ell)+\zeta\bigr)\binom k2.
	\]
	Since $u\le n/k$, applying \eqref{eq:data-count} to the collection of all regularity data gives
	\[
	\log R_{H,\ell}(G)
	\le(1+\xi)\frac{pn^2}{2}
	\bigl(\lambda(H,\ell)+\zeta\bigr)+\nu pn^2.
	\]
	Choosing $\xi,\zeta,\nu$ sufficiently small and using
	$e(G)=(1\pm\xi)p\binom n2$, we obtain
	\begin{equation}\label{eq:lambda-random-upper}
		R_{H,\ell}(G)
		\le\exp\bigl((\lambda(H,\ell)+\delta)e(G)\bigr).
	\end{equation}
	
	For the reverse inequality, let $\cP_n$ be an $H$-SDR-free template on $K_n$ attaining
	$\operatorname{ex}_{\log}(n;H,\ell)$, and let
	$w_e=\log|\cP_n(e)|$.  Every colouring of $G(n,p)$ obtained by assigning each present edge $e$ a colour from $\cP_n(e)$ is rainbow-$H$-free, so
	\[
	\log R_{H,\ell}(G(n,p))
	\ge Z:=\sum_{e\in E(K_n)}\mathbf 1_{\{e\in G(n,p)\}}w_e.
	\]
	The variables in $Z$ are independent and bounded by $\log \ell$, and
	\[
	\operatorname{Var}Z\le p\binom n2(\log\ell)^2=O(pn^2).
	\]
	The constant $(e(H)-1)$-palette shows that $\lambda(H,\ell)\ge\log(e(H)-1)>0$, and
	\[
	\mathbb EZ=p\operatorname{ex}_{\log}(n;H,\ell)
	\ge p\lambda(H,\ell)\binom n2
	\]
	by the definition of $\lambda(H,\ell)$.  Since $p\binom n2\to\infty$, Bernstein's inequality gives
	\[
	Z\ge p\lambda(H,\ell)\binom n2-o(pn^2)
	\]
	with high probability.  Together with the concentration of $e(G(n,p))$, this yields
	\begin{equation}\label{eq:lambda-random-lower}
		R_{H,\ell}(G(n,p))
		\ge\exp\bigl((\lambda(H,\ell)-\delta)e(G(n,p))\bigr).
	\end{equation}
	Equations \eqref{eq:lambda-random-upper} and \eqref{eq:lambda-random-lower}, together with the below-threshold argument above, prove \cref{thm:lambda-transfer}.  Combining this theorem with \cref{cor:hall-lambda}, replacing the additive above-threshold error by $\delta\log(e(H)-1)$, and using the constant $(e(H)-1)$-palette for the above-threshold lower bound proves \cref{cor:hall-transition}.
	
	\subsection{Counting stability}
	
	Assume now that $e(H)\le\ell<(e(H)-1)^{e(H)/(e(H)-2)}$ and fix $\eps>0$.
	Choose $\theta>0$ sufficiently small in terms of $\eps$ and $\ell$, and apply the stability part of \cref{prop:hall-stability}, obtaining $\eta_0,\kappa>0$ and an integer $m_*$.  Repeat the regularity reduction from \cref{subsec:dense-transference}.  Choose, in order, $0<\xi<1/4$, $0<\alpha<1/(3\ell)$, $m_0\ge m_*$, $\rho>0$, and then $A>0$, subject to
	\[
	0<\rho<\min\{\eta_0,1/2,\eps_0(H,\alpha)\}
	\]
	and so that the quantity $\sigma$ in \eqref{eq:sigmadef} and the enumeration error $\nu$ in \eqref{eq:data-count} satisfy
	\begin{align}
		&\sigma+(1+\xi)\theta+\frac{\ell\alpha}{2}
		<\frac{\eps(1-2\xi)}2,\label{eq:stability-distance-errors}\\
		&(1+\xi)\kappa(1-\rho)^2-\nu-
		2\xi\log(e(H)-1)>0.\label{eq:stability-entropy-errors}
	\end{align}
	These choices are possible because $\theta$ is chosen first, $\omega_\ell(\alpha)\to0$ as $\alpha\to0$, $\sigma\to0$ as $\rho,m_0^{-1}\to0$ for fixed $\xi$, and the enumeration contribution to $\nu$ decreases when $A$ increases.  In particular,
	$|X|\le\rho k^2<\eta_0k^2$ for every tuple $\mathcal D$.
	
	Suppose that the partial template in $\mathcal D$ satisfies \eqref{eq:paletteclose} for a set
	$B\in\binom{[\ell]}{e(H)-1}$.  The edges whose colours lie outside $B$ are contained in the union of the following three classes:
	\begin{enumerate}
		\item the edges in $E_{\mathrm{out}}(\mathcal D)$, of which there are at most $\sigma pn^2$;
		\item the edges between pairs $V_i,V_j$ with $ij\notin X$ and $\cP_X(ij)\ne B$; there are at most $\theta k^2$ such pairs and at most $(1+\xi)\theta pu^2k^2$ such edges;
		\item for pairs $ij\notin X$ with $\cP_X(ij)=B$, the edges receiving a colour outside $B$, of which there are in total at most $\ell\alpha pu^2\binom k2$.
	\end{enumerate}
	Therefore
	\[
	|\{e\in E(G):\chi(e)\notin B\}|
	\le
	\left(\sigma+(1+\xi)\theta+\frac{\ell\alpha}{2}\right)pn^2.
	\]
	By \eqref{eq:stability-distance-errors} and part~(1) of \cref{lem:host}, the right-hand side is smaller than
	$\eps e(G)$ for all sufficiently large $n$.  Hence, if
	$d_{e(H)-1}(\chi)\ge\eps e(G)$, none of the regularity data representing $\chi$ can satisfy \eqref{eq:paletteclose}.  The contrapositive form of \cref{prop:hall-stability}, namely \eqref{eq:entropydeficit}, therefore applies to the partial template in every such tuple.
	
	Let $\mathcal S_{\mathrm{far}}$ be the collection of regularity data whose partial templates do not satisfy \eqref{eq:paletteclose} for any $B\in\binom{[\ell]}{e(H)-1}$, and let
	\[
	N_{\mathrm{far}}
	:=\bigl|\{\chi\in\cR_{H,\ell}(G):d_{e(H)-1}(\chi)\ge\eps e(G)\}\bigr|.
	\]
	If $\mathcal S_{\mathrm{far}}=\emptyset$, then $N_{\mathrm{far}}=0$ and there is nothing to prove.  Hence assume that $\mathcal S_{\mathrm{far}}$ is nonempty.
	Every colouring counted by $N_{\mathrm{far}}$ is represented by at least one element of
	$\mathcal S_{\mathrm{far}}$, and hence
	\[
	N_{\mathrm{far}}\le\sum_{\mathcal D\in\mathcal S_{\mathrm{far}}}N(\mathcal D).
	\]
	For each $\mathcal D\in\mathcal S_{\mathrm{far}}$,
	\begin{align*}
		&(1+\xi)pu_{\mathcal D}^{2}
		\Ent_{X_{\mathcal D}}(\cP_{\mathcal D})\\
		&\qquad\le
		(1+\xi)pu_{\mathcal D}^{2}
		\left(\binom{k_{\mathcal D}}2\log(e(H)-1)-\kappa k_{\mathcal D}^{2}\right)\\
		&\qquad\le
		(1+\xi)\frac{pn^2}{2}\log(e(H)-1)
		-(1+\xi)\kappa(1-\rho)^2pn^2,
	\end{align*}
	where we used $k_{\mathcal D}u_{\mathcal D}=n-|V_0|\ge(1-\rho)n$.
	Applying \eqref{eq:data-count} to $\mathcal S_{\mathrm{far}}$, using \eqref{eq:stability-entropy-errors}, and then decreasing a positive constant $\tau$ if necessary, we obtain
	\[
	\log N_{\mathrm{far}}
	\le \frac{1-2\xi}{2}pn^2\log(e(H)-1)-\tau pn^2
	\]
	for some $\tau=\tau(H,\ell,\eps)>0$.  For all sufficiently large $n$, part~(1) of \cref{lem:host} gives $e(G)\ge(1-2\xi)pn^2/2$, and hence
	\[
	\log N_{\mathrm{far}}
	\le e(G)\log(e(H)-1)-\tau pn^2.
	\]
	The corresponding upper bound $e(G)\le(1+2\xi)pn^2/2$ now gives
	\[
	N_{\mathrm{far}}
	\le(e(H)-1)^{(1-\gamma)e(G)}
	\]
	for some $\gamma=\gamma(H,\ell,\eps)>0$.  Since the constant $(e(H)-1)$-palette gives $R_{H,\ell}(G)\ge(e(H)-1)^{e(G)}$ and $e(G)\to\infty$ with high probability in this range, it also follows that $N_{\mathrm{far}}=o(R_{H,\ell}(G))$.  This proves \cref{thm:stability}.
	
	\section{Deletion bounds and the many-colour regime}
	
	The palette product bound in \cref{lem:hallgeneral,prop:localthreshold} determines the entropy--Tur\'an density throughout the Hall range.  Beyond this interval, we derive upper and lower bounds in terms of the edge-deletion density profile.  These estimates determine the leading term as $\ell\to\infty$ and identify exactly when the template with one common $(e(H)-1)$-palette attains the optimal limiting entropy density for every fixed number of colours.
	
	\subsection{Upper bound for the entropy--Tur\'an density}\label{subsec:Hminus-upper}
	
	We next prove the upper estimate \eqref{eq:lambda-upper-minus} in \cref{thm:deletion-bounds}.  Let $\cP$ be an $H$-SDR-free $\ell$-palette template on $K_n$, and let $L=L(\cP)$ be the graph formed by the edges whose palettes have size at least $e(H)$.
	
	The following observation separates the edges whose palettes have size at least $e(H)$ from the remaining edges, each of which contributes at most $\log(e(H)-1)$ to the entropy.  Hall's theorem shows that the former edges cannot contain any one-edge deletion of $H$.
	
	\begin{lemma}\label{lem:large-Hminus-free}
		The graph $L$ is $\mathcal H^-$-free.
	\end{lemma}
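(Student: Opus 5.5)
We argue by contradiction. Suppose $L$ contains a copy of $H-e$ for some $e\in E(H)$, and write $e=xy$. That copy is given by an injection $\psi$ of $V(H-e)$ into $[n]$; recall that $H-e$ is the spanning subgraph, so its vertex set is all of $V(H)$. Hence $\psi$ is an injection $\phi:V(H)\to[n]$ with $\phi(f)\in E(L)$ for every $f\in E(H)\setminus\{e\}$. Because we work on $K_n$, the pair $\phi(x)\phi(y)$ is also an edge, so $\phi$ defines a copy of $H$ in $K_n$. Since $\cP$ is $H$-SDR-free, the palettes $\{\cP(\phi(f)):f\in E(H)\}$ have no SDR. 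We will contradict this by constructing an SDR directly.

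The palette $\cP(\phi(e))$ is nonempty, so we choose any colour $c_e$ in it. The remaining $e(H)-1$ palettes each have size at least $e(H)$ because their pairs lie in $L$. Delete $c_e$ from each of them; every resulting set still has size at least $e(H)-1$. A family of $e(H)-1$ sets, each of size at least $e(H)-1$, satisfies Hall's condition trivially: any $s$ of them have union of size at least $e(H)-1\ge s$. Alternatively, choose representatives greedily, since each set keeps at least one unused colour after at most $e(H)-2$ earlier choices. These representatives together with $c_e$ are pairwise distinct, so they form an SDR, which is the desired contradiction.

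The argument has no real obstacle. The only point to check is that "$L$ contains $H-e$" must be read with $H-e$ as a spanning subgraph, keeping $x$ and $y$ even if one of them becomes isolated. This is what lets the copy extend to a copy of $H$ in $K_n$. If isolated vertices were dropped from $H-e$, we would instead map any vertex of $H$ missing from the image to an unused vertex of $K_n$; this is possible once $n\ge v(H)$, and for $n<v(H)$ the statement is vacuous because $L$ has fewer than $v(H)$ vertices. Since the paper deletes isolated vertices of $H$ itself but defines $H-S$ as a spanning subgraph, the extension is immediate.
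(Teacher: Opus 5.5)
Your proof is correct and is essentially the paper's: both extend the copy of $H-e$ to a copy of $H$ in $K_n$ and find an SDR because all but one of the $e(H)$ palettes have size at least $e(H)$. The paper checks Hall's condition on the whole family at once (every subfamily other than $\{\cP(\phi(e))\}$ contains a palette of size at least $e(H)$), while you fix the colour on $e$ first and then choose greedily, which amounts to the same verification.

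One small correction to your closing remark. Under the reading where isolated vertices are dropped from $H-e$, the case $n<v(H)$ is not vacuous, and the claim can actually fail. For example, take $H$ the path with three edges, $n=3$ and every palette equal to $[\ell]$. This does not affect your proof, since the paper takes $H-S$ to be spanning and only large $n$ matters.
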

	
	\begin{proof}
		Suppose that $L$ contains a copy of $H-e_0$ for some $e_0\in E(H)$.  Add the missing edge $e_0$ in the ambient complete graph.  On the other $e(H)-1$ edges, every palette has size at least $e(H)$, while the palette on $e_0$ is nonempty.  We verify Hall's condition for these $e(H)$ palettes.  A subfamily consisting only of the palette on $e_0$ has union of size at least one.  Every other nonempty subfamily contains a palette of size at least $e(H)$, so its union has size at least $e(H)$, which is at least the size of the subfamily.  Thus the palettes have an SDR, contradicting the assumption that the template is $H$-SDR-free.
	\end{proof}
	
	By \cref{lem:large-Hminus-free} and the Erd\H{o}s--Stone--Simonovits theorem for the finite family $\mathcal H^-$ \cite{ErdosStone,Simonovits},
	\[
	e(L)\le\bigl(\pi_-(H)+o(1)\bigr)\binom n2.
	\]
	Every edge outside $L$ has a palette of size at most $e(H)-1$.  Consequently,
	\[
	\sum_{e\in E(K_n)}\log|\cP(e)|
	\le e(L)\log \ell+
	\left(\binom n2-e(L)\right)\log(e(H)-1),
	\]
	and hence
	\[
	\lambda(H,\ell)
	\le \pi_-(H)\log \ell+\bigl(1-\pi_-(H)\bigr)\log(e(H)-1).
	\]
	This proves \eqref{eq:lambda-upper-minus}.
	
	If there is some $e\in E(H)$ such that $H-e$ is bipartite, then $\pi_-(H)=0$.  The preceding inequality and the constant template with a common $(e(H)-1)$-set give
	\[
	\lambda(H,\ell)=\log(e(H)-1)
	\]
	for every fixed $\ell\ge e(H)$.  Together with \cref{thm:lambda-transfer}, this also shows that, for every fixed $\ell$, the above-threshold per-edge exponential base is $e(H)-1$.  More precisely, whenever
	$p n^{1/m_2(H)}\to\infty$,
	\[
	\frac{\log R_{H,\ell}(G(n,p))}{e(G(n,p))}
	\longrightarrow\log(e(H)-1)
	\]
	in probability.  The matching lower bound is given by all colourings using a fixed set of $e(H)-1$ colours.
	
	\subsection{Bounds from the edge-deletion density profile}
	
	Fix $1\le j\le e(H)-1$.  Recall that
	\[
	\mathcal H_j^-:=\{H-S:S\subseteq E(H),\ |S|=j\}.
	\]
	By the Erd\H{o}s--Stone--Simonovits theorem, the Tur\'an density of this finite family is
	$\pi_j(H)$.  Let $L_n$ be an $\mathcal H_j^-$-free graph with the maximum possible number of edges.  Then
	\[
	e(L_n)=\bigl(\pi_j(H)+o(1)\bigr)\binom n2.
	\]
	Assign the palette $[\ell]$ to every edge of $L_n$, and assign one fixed $j$-set of colours to each edge outside $L_n$.  Every copy of $H$ has at least $j+1$ edges outside $L_n$: otherwise one may enlarge the set of outside edges to a $j$-set $S\subseteq E(H)$, producing a copy of $H-S$ inside $L_n$, a contradiction with $L_n$ being $\mathcal H_j^-$-free.  The $j+1$ outside palettes have union of size $j$, so Hall's condition fails.  The template is therefore $H$-SDR-free and has entropy per edge
	\[
	\pi_j(H)\log \ell+\bigl(1-\pi_j(H)\bigr)\log j+o(1).
	\]
	Taking the maximum over $j$ proves \eqref{eq:lambda-lower-profile}.
	
	Together with the upper bound for $\lambda(H,\ell)$ from \cref{subsec:Hminus-upper}, this gives
	\[
	\pi_-(H)\log \ell
	\le\lambda(H,\ell)
	\le\pi_-(H)\log \ell+\bigl(1-\pi_-(H)\bigr)\log(e(H)-1),
	\]
	in addition to the lower bound $\lambda(H,\ell)\ge\log(e(H)-1)$ supplied by the common $(e(H)-1)$-palette.  Dividing by $\log \ell$ proves the limit in \cref{thm:deletion-bounds}.  If $\pi_-(H)=0$, then some member of $\mathcal H^-$ is bipartite and the upper bound gives $\lambda(H,\ell)=\log(e(H)-1)$ for every fixed $\ell$.  If $\pi_-(H)>0$, the $j=1$ lower bound exceeds $\log(e(H)-1)$ for all sufficiently large $\ell$.  This proves the final assertion of \cref{thm:deletion-bounds}.
	
	For comparison, the same construction can be applied directly to the random host: assign $[\ell]$ to the edges of $G(n,p)\cap L_n$ and the fixed $j$-set to all remaining edges of $G(n,p)$.  Chernoff's inequality applies to $e(G(n,p))$ and, when $\pi_j(H)>0$, to $e(G(n,p)\cap L_n)$.  When $\pi_j(H)=0$, the latter variable has expectation $o(pn^2)$, and Markov's inequality gives $e(G(n,p)\cap L_n)=o(e(G(n,p)))$ with high probability.  It follows that, for every fixed $j\in[e(H)-1]$, whenever $pn^2\to\infty$, with high probability
	\[
	R_{H,\ell}(G(n,p))
	\ge
	\exp\left(\bigl(\pi_j(H)\log\ell+(1-\pi_j(H))\log j-o(1)\bigr)e(G(n,p))\right).
	\]
	In the above-threshold range this inequality also follows from \cref{thm:lambda-transfer} and \eqref{eq:lambda-lower-profile}.
	
	Specialising the complete-graph construction to one deleted edge gives a useful explicit comparison.  Put $d=\min_{e\in E(H)}\chi(H-e)$.  If $d\ge3$, then $\lambda(H,\ell)\ge\pi_-(H)\log\ell$, so the $(e(H)-1)$-colour rate fails as soon as
	\[
	\ell>(e(H)-1)^{1/\pi_-(H)}
	=(e(H)-1)^{(d-1)/(d-2)}.
	\]
	If $d=2$, then $H-e$ is bipartite for some edge, and
	\cref{thm:deletion-bounds,thm:lambda-transfer} show that $\lambda(H,\ell)=\log(e(H)-1)$ for every fixed $\ell\ge e(H)$.
	
	\subsection{Further questions on \texorpdfstring{$\lambda(H,\ell)$}{lambda(H,l)}}\label{subsec:further-lambda}
	
	The results above determine $\lambda(H,\ell)$ throughout the Hall range and determine its leading term as $\ell\to\infty$, but leave its value open for many intermediate pairs $(H,\ell)$.  The templates constructed above from $\mathcal H_j^-$-free graphs give the explicit lower bound in \eqref{eq:lambda-lower-profile}; the remaining question is whether other palette structures can give a larger density.
	
	\begin{problem}\label{prob:determine-lambda}
		Determine $\lambda(H,\ell)$ for fixed $H$ and $\ell\ge e(H)$.  In particular, decide whether
		\[
		\lambda(H,\ell)=
		\max_{1\le j\le e(H)-1}
		\bigl(\pi_j(H)\log\ell+(1-\pi_j(H))\log j\bigr),
		\]
		or at least whether
		\[
		\lambda(H,\ell)=\log(e(H)-1)
		\quad\Longleftrightarrow\quad
		\ell^{\pi_j(H)}j^{1-\pi_j(H)}\le e(H)-1
		\quad\text{for every }1\le j\le e(H)-1.
		\]
	\end{problem}
	
	This is a rainbow-specific instance of the broader problem of describing entropy densities and extremal templates for multicolour hereditary properties \cite[Problem~5.1]{FROU}.  Generalised Erd\H{o}s--Rothschild problems can have competing multipartite and iterated extremal templates \cite{FROU,GuptaPehovaPowierskiStaden}, so the equality in \cref{prob:determine-lambda} is not a formal consequence of the deletion bounds.
	
	The case $H=K_4$ already contains a concrete unresolved interval.  Here $e(K_4)=6$, and the Hall range gives
	\[
	\lambda(K_4,\ell)=\log5
	\qquad (6\le\ell\le11),
	\]
	whereas deleting one edge and using a balanced bipartite graph gives
	\[
	\lambda(K_4,\ell)\ge\frac12\log\ell>\log5
	\qquad (\ell\ge26).
	\]
	For $\ell\ge12$, the result of H\`an, Hoppen, M\"uller and Schmidt also gives the upper bound $\lambda(K_4,\ell)\le\frac23\log\ell$ \cite{HanHoppenMullerSchmidt}.  Thus the present bounds leave open whether $\lambda(K_4,\ell)=\log5$ for $12\le\ell\le25$.  More generally, for $H=K_t$ and $1\le s\le t-2$, let
	\[
	d_s(t):=
	\min_{\substack{a_1+\cdots+a_s=t\\a_1,\ldots,a_s\ge1}}
	\sum_{i=1}^s\binom{a_i}{2}.
	\]
	Let $T_s(n)$ be the balanced complete $s$-partite graph.  Every set of $t$ vertices determines at least $d_s(t)$ pairs lying inside the vertex classes.  Assigning the palette $[\ell]$ to the cross-pairs of $T_s(n)$ and one fixed $(d_s(t)-1)$-set of colours to the pairs inside its classes therefore gives a deletion template with per-edge exponential base
	\[
	\ell^{1-1/s}\bigl(d_s(t)-1\bigr)^{1/s},
	\]
	which supplies explicit test cases for \cref{prob:determine-lambda}.
	
\section*{Acknowledgement}
The authors gratefully acknowledge the Fourth ECOPRO Student Research
Program, held at the Institute for Basic Science (IBS) in summer 2026,
for its support. The authors acknowledge the use of AI tools during the
exploratory stage of this project. All mathematical arguments and proofs
presented in the final manuscript were developed and rigorously verified
by the authors. The authors take full responsibility for the content of
the manuscript.

\end{document}